\newtheorem{theorem}{Theorem}[section]
\newtheorem{conjecture}{Conjecture}[section]
\newtheorem{remark}{Remark}[section]
\newtheorem{claim}{Claim}[theorem]
\newtheorem{problem}{Problem}
\newtheorem{proposition}{Proposition}[section]
\newenvironment{proof}[1][Proof]{\textbf{#1.} }{\ \rule{0.5em}{0.5em}}
\newcommand{\dist}{\mathrm{dist}}
\author{Maidoun Mortada\footnote{KALMA Laboratory, Faculty of Sciences, Lebanese University, Baalbek, Lebanon.}, Olivier Togni\footnote{LIB Laboratory, Université Bourgogne Europe, Dijon, France.}}
\title{On $S$-packing Coloring of Bounded Degree Graphs}
\begin{document}
\maketitle

\begin{abstract}
Given a sequence $S=(s_1,s_2,\ldots,s_p)$, $p\geq 2$, of non-decreasing integers, an $S$-packing coloring of a graph $G$ is a partition of its vertex set into $p$ disjoint sets $V_1,\ldots, V_p$ such that any two distinct vertices of $V_i$ are at a distance greater than $s_i$, $1\le i\le p$. In this paper, we study the $S$-packing coloring problem on graphs of bounded maximum degree and for sequences mainly containing 1's and 2's ($i^r$ in a sequence means $i$ is repeated $r$ times). Generalizing existing results for subcubic graphs, we prove a series of results on graphs of maximum degree $k$: We show that graphs of maximum degree $k$ are $(1^{k-1},2^k)$-packing colorable. Moreover, we refine this result for restricted subclasses: A graph of maximum degree $k$ is said to be $t$-saturated, $0\le t\le k$, if every vertex of degree $k$ is adjacent to at most $t$ vertices of degree $k$. We prove that any graph of maximum degree $k\ge 3$ is $(1^{k-1}, 3)$-packing colorable if it is 0-saturated, $(1^{k-1}, 2)$-packing colorable if it is $t$-saturated, $1\leq t\leq k-2$; and $(1^{k-1},2^{k-1})$-packing colorable if it is $(k-1)$-saturated. We also propose some conjectures and questions.
\end{abstract}

\section{Introduction}

For the classical vertex coloring problem, the famous Brooks' Theorem asserts that a connected graph of maximum degree $k$ can be colored using at most $k$ colors unless it is an odd cycle or a complete graph. A generalization of coloring is $d$-distant coloring, in which two vertices can receive the same color if they are at a distance of at least $d+1$ apart. Equivalently, it corresponds to a coloring of the $d^{th}$ power $G^d$ of the graph $G$. A Brooks-type upper bound for the minimum number of colors of a 2-distant coloring of a graph of maximum degree $k$ is $k^2+1$. It can be obtained by a greedy coloring algorithm, and it is tight for Moore graphs like the Petersen graph, which has 10 vertices at a pairwise distance at most 2 and thus needs 10 colors for a 2-distant coloring.

As a generalization of $d$-distant coloring, $S$-packing coloring allows us to define, for each color used, the minimum distance of reutilization (or in which power of $G$ the conflicts are considered for this color). Formally, for a sequence of non-decreasing integers $S=(s_1,s_2,\dots, s_p)$, an $S$-packing coloring of a graph $G$ is a partition of its vertex set $V$ into $p$ sets $V_1, V_2,\dots, V_p$ such that any two distinct vertices of $V_i$ are at distance at least $s_i+1$, $1\le i\le p$. A $d$-distant coloring is thus a $(d,d,\dots,d)$-packing coloring. The notion of packing and $S$-packing coloring was introduced by Goddard et al. in 2008~\cite{GHH+}. The interested reader is referred to the paper of Bre\v{s}ar, Ferme and Klav\v{z}ar~\cite{BFK} for a complete survey up to 2020.

If the sequence $S$ contains only 1's and 2's, then we are in a type of coloring that can be viewed as intermediate between proper and 2-distant coloring. We denote by $(1^k,2^\ell)$ the sequence of length $k+\ell$ with $k$ ones and $\ell$ twos. 
Brooks' Theorem can be rewritten as any connected graph of maximum degree $k$ is $(1^k)$-packing colorable except if it is an odd cycle or a complete graph. For 2-distant coloring, we have that any graph of maximum degree $\ell$ is $(2^{\ell^2})$-packing colorable except if $G^2$ is an odd cycle or a complete graph. One can thus ask what happens if both $k$ and $\ell$ are non-zero.

For graphs of maximum degree $\Delta =2$, i.e., union of paths and cycles, it is easy to characterize the minimal values of $k,\ell$ for which they are $(1^k,2^\ell)$-colorable: any path is $(1^2)$-, $(1,2^2)$- and $(2^3)$-packing colorable. Any cycle except $C_5$ is $(1^2,2)$-packing colorable. In addition, any cycle except $C_5$ is $(1,2^2)$- and $(2^4)$-packing colorable.

In contrast, for maximum degree 3, things are much more complicated. Finding bounds for $k,\ell$ for which any subcubic graph is $(1^k,2^\ell)$-packing colorable was the topic of many papers in recent years~\cite{GT, BKKRW, LLRY, YW, M24, TT, MT, MT2}. The main open problems are whether all cubic graphs (except the Petersen graph $P$) are $(1,2^5)$-packing colorable and $(1,1,2,3)$-packing colorable. To approach these problems, many subclasses of subcubic graphs have been considered, and in particular, the $i$-saturated subclasses (that can be extended to any maximum degree graphs): We call {\em $k$-degree graph} every graph $G$ with $\Delta(G)\leq k$. In a graph, a vertex of degree $i$ is called an {\em $i$-vertex}. A $k$-degree graph is said to be {\em $i$-saturated}, $0\le i\le k$, if each $k$-vertex is adjacent to at most $i$ $k$-vertices. See~\cite{MT3} and references therein for recent results on $S$-packing coloring of $i$-saturated subcubic graphs.

Note that for maximum degree larger than 3, there are only results for restricted subclasses of quartic graphs like distance graphs~\cite{BFK2, HH}.

\paragraph{Notation:}
For a graph $G$ and a vertex $x$ in $G$, let $N_G(x)$ be the set of neighbors of $x$ in $G$ and let $d_G(x)=|N(x)|$ be the degree of $x$ in $G$. If no confusion exists, $N(x)$ (resp $d(x)$) is used instead of $N_G(x)$ (resp. $d_G(x)$). For $X\subseteq V(G)$, $N_G(X)$ (or simply $N(X)$) is the set of neighbors of the vertices in $X$. The distance $\dist_G(x,y)$ (or simply $\dist(x,y)$) between two vertices $x,y$ in $G$ is the length of a shortest path between $x$ and $y$. For a path $P=x_1 \dots x_n$, we call $x_1$ and $x_n$ the {\em ends} of $P$, while each other vertex is called an {\em interior} vertex. The {\em length} of a path $P$, $l(P)$, is the number of its edges.  A path $P$ in a graph $G$ is said to be {\em maximal} if $P$ is not a subpath of any other path in $G$. 

Let $G$ be a $k$-degree graph. If $H$ is a subgraph of $G$ or a subset of $V(G)$ and if $x$ is a vertex in $H$, by saying $x$ is an $i$-vertex, we mean that $x$ is an $i$-vertex in $G$, $0\leq i\leq k$. That is, $x$ may not have $i$ neighbors in $H$ but has them in $G$. This is applied to the whole paper. 
Moreover, we will use $1_1,\ldots, 1_k$ (or simply $1$ if $k=1$) and $2_1,\ldots, 2_{\ell}$ (or simply $2$ if $\ell=1)$ to denote the colors of a $(1^k,2^{\ell})$-packing coloring.

For a graph $G$, the $k^{th}$-power graph $G^k$ of $G$ is the graph obtained from $G$ after adding an edge between any two non-adjacent vertices $x$ and $y$ in $G$ with $\dist_G(x,y)\leq k$.  It is important to note that if $G^2$ is $k$-colorable, $k\geq 2$, then $G$ is $(2^k)$-packing colorable. In fact, if two vertices $x$ and $y$ in $G^2$ are of the same color $1_i$ for some $i$, $1\leq i\leq k$, when considering  a proper $k$-coloring of $G^2$ using the  colors $1_1,\ldots, 1_k$, then $x$ and $y$ are not adjacent in $G^2$, and so $\dist_G(x,y)>2$. Thus color each vertex $x$ in $G$ by $2_i$ if $x$ is of color $1_i$ when considering a $(1^k)$-packing coloring of $G^2$, and so we obtain a $(2^k)$-packing coloring of $G$.

\paragraph{Summary:}
Our aim in this paper is to extend the results on $(1^k,2^\ell)$-packing colorability of graphs from maximum degree $3$ to any $k$. In particular, we consider $i$-saturated $k$-degree graphs. In Section 2, we prove that every 0-saturated $k$-degree graph, $k\geq 3$,  is $(1^{k-1}, 3)$-packing colorable while in Section 3, that every $t$-saturated $k$-degree graph, $k\ge 3$ and $1\leq t\leq k-2$, is $(1^{k-1}, 2)$-packing colorable. In Section 4, we prove that  every $(k-1)$-saturated $k$-degree graph, $k\geq 4$, is $(1^{k-1},2^{k-1})$-packing colorable. Finally, it is proven in Section 5 that any $k$-degree graph is $(1^{k-1},2^k)$-packing colorable, and we discuss in Section 6 the sharpness of these results and propose some open problems.

\section{0-Saturated $k$-Degree Graphs }

In this section we consider $0$-saturated $k$-degree graphs and extend the known results for subcubic graphs~\cite{YW,M24} to any maximum degree $k\ge 3$.
\begin{theorem}
    Let $G$ be a 0-saturated $k$-degree graph, $k\geq 3$,  then $G$ is $(1^{k-1}, 3)$-packing colorable.
\end{theorem}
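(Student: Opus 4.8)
The plan is to realize the desired coloring as a proper $(k-1)$-coloring of almost all of $G$ together with one extra color class $W$ that is a $3$-packing (pairwise distance at least $4$). I would first assume $G$ is connected and dispose of the case $G\cong K_k$ directly, since then $k-1$ vertices can take the distinct colors $1_1,\dots,1_{k-1}$ and the last vertex takes the $3$-color, the packing condition being vacuous. In general, set $A=\{x\in V(G):d(x)=k\}$. Because $G$ is $0$-saturated, $A$ is an independent set and every neighbor of a vertex of $A$ has degree at most $k-1$; in particular every vertex outside $A$ already has degree at most $k-1$.

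The reduction I would rely on is Brooks' Theorem: a graph of maximum degree at most $k-1$ is properly $(k-1)$-colorable unless some component is $K_k$ (or, when $k=3$, an odd cycle). Hence it would suffice to build $W$ so that (i) $W$ is a $3$-packing, (ii) each vertex of $A$ loses a neighbor to $W$, so that $\Delta(G-W)\le k-1$, and (iii) no component of $G-W$ is isomorphic to $K_k$ or, for $k=3$, to an odd cycle. I would construct $W$ greedily: iterate over the vertices of $A$, and for each one not yet having a neighbor in $W$, try to move one of its neighbors into $W$ while preserving the $3$-packing property; separately, I would insert one breaker vertex into each would-be $K_k$ component. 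Once $W$ is fixed, the colors $1_1,\dots,1_{k-1}$ are assigned to $G-W$ by Brooks, and $W$ receives the $3$-color, giving the claimed $(1^{k-1},3)$-packing coloring.

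The hard part is securing (i) and (ii) at the same time. If two $k$-vertices $u$ and $v$ lie at distance at most $3$, then any neighbor of $u$ and any neighbor of $v$ are at distance at most $3$ from each other, so they cannot both belong to the $3$-packing $W$; thus the naive ``one deactivating neighbor per $k$-vertex'' scheme breaks down exactly at clusters of mutually close $k$-vertices. I expect this to be the crux, and to require either a local analysis of such clusters --- showing that, although not every $k$-vertex can be deactivated, the survivors can be arranged so that their neighborhoods are never rainbow, whence a proper $(k-1)$-coloring of $G-W$ still exists --- or a minimal-counterexample and discharging argument that rules out dense configurations of $k$-vertices. A secondary, more routine obstacle is checking that the $K_k$ and odd-cycle exceptions of Brooks' Theorem can always be destroyed by the breaker vertices without violating the spacing required of $W$.
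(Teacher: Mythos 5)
Your reduction (find a $3$-packing $W$ meeting condition (i)--(iii), then apply Brooks' Theorem to $G-W$) is sound as a reduction, but the proposal stops exactly where the proof has to begin: you never construct $W$, and you explicitly concede that you do not know how to handle clusters of $k$-vertices at mutual distance at most $3$. That is not a routine detail. Since $G$ may contain arbitrarily many $k$-vertices pairwise at distance $2$ or $3$, the scheme ``one deactivating neighbor per $k$-vertex'' genuinely fails (a single $3$-packing simply cannot dominate all of them), and your proposed fallback --- arranging that the surviving $k$-vertices have non-rainbow neighborhoods so that a $(k-1)$-coloring still exists --- is an unproved assertion that is itself essentially the whole difficulty. (Incidentally, the supporting claim that neighbors of two $k$-vertices at distance at most $3$ are themselves at distance at most $3$ is false; the correct bound is $5$, though your underlying worry about close $k$-vertices is legitimate.) As written, the argument does not prove the theorem.

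It is worth contrasting this with how the paper sidesteps the obstacle. Rather than reserving all $k-1$ proper colors for $G-W$, the paper first extracts an independent set $S$ maximizing the weight $\phi(T)=|X(T)|+0.6|Y(T)|$ (where $X(T)$ are the $k$-vertices of $T$ and $Y(T)$ the others) and gives all of $S$ the single color $1_{k-1}$. An exchange argument using $0$-saturation shows every vertex of $\overline{S}=V(G)\setminus S$ has a neighbor in $S$ \emph{and} that every $k$-vertex of $\overline{S}$ keeps at most $k-2$ neighbors in $\overline{S}$, so $\Delta(G[\overline{S}])\le k-2$. Now Brooks' Theorem needs only the remaining $k-2$ colors, its only obstructions are $K_{k-1}$ components of $G[\overline{S}]$, these are pairwise vertex-disjoint, and one non-$k$-vertex chosen from each can be shown (again by exchange arguments on $\phi$) to lie at pairwise distance at least $4$. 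So color $3$ is spent on a provably sparse set of breaker vertices rather than on a dominating set of the $k$-vertices. If you want to salvage your outline, you need some analogue of this first step that tames the $k$-vertices globally before color $3$ enters the picture.
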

\begin{proof}
     Let $T$ be an independent set of $G$, $X(T)$ be the set of $k$-vertices in $T$ and let $Y(T)$ be the set of $i$-vertices in $T$, $i< k$. We define $\phi(T)=|X(T)|+0.6|Y(T)|$ and we denote by $\overline{T}$ the set $V(G)\setminus T$.
     
     Let $S$ be an independent set of $G$ such that $\phi(S)\geq \phi(T)$ for every independent set $T$. By the maximality of $\phi(S)$, it is obvious to notice that every vertex in $\overline{S}$ has a neighbor in $S$. Moreover, if $u$ is a $k$-vertex in $\overline{S}$, then $d_{G[\overline{S}]}(u)\leq k-2$. In fact, suppose that $d_{G[\overline{S}]}(u)= k-1$ and let $v$ be the unique neighbor of $u$ in $S$. As $G$ is 0-saturated $k$-degree graph, then $v$ is an $i$-vertex with $i< k$. Consequently, $S'=(S\setminus \{v\})\cup \{u\}$ is an independent set with $\phi(S')=\phi(S)-0.6+1$, a contradiction. Therefore, $\Delta(G[\overline{S}])\leq k-2$.
     
       Color the vertices of $S$ by $1_{k-1}$. Our plan is to prove that the vertices of $\overline{S}$ can be colored by the colors $1_1,\dots, 1_{k-2}$ and the color 3. 
       
      Remark that if $K$ is a complete subgraph of $G[\overline{S}]$ of order $k-1$, then $K$ contains at most one $k$-vertex of $G$ since $G$ is 0-saturated. Moreover, if $K_1$ and $K_2$ are two distinct complete subgraphs of $G[\overline{S}]$ of order $k-1$, then  $V(K_1)\cap V(K_2)=\emptyset$. In fact, if there exists $u\in V(K_1)\cap V(K_2)$, then $u$ has $k-2$ neighbors in $K_1$ and at least one neighbor in $K_2\setminus K_1$, and so $d_{G[\overline{S}]}(u)\geq  k-1$, a contradiction. 
      
     Let $C\subset \overline{S}$ such that
        for every  complete subgraph $K$ of order $k-1$  in $G[\overline{S}]$, we have  $|V(K)\cap C|=1$ and if $u\in V(K)\cap C$ then $u$ is a non $k$-vertex. 
   Hence every connected component of $G[\overline{S}\setminus C]$ has no complete subgraph of order $k-1$. This means that, by Brooks' Theorem,  every connected component of    $G[\overline{S}\setminus C]$ is $(k-2)$-colorable. Thus consider a coloring of each connected component of $G[\overline{S}]\setminus C$ using the colors $1_1,\dots, 1_{k-2}$. The next step is to prove that the distance between any two vertices in $C$ is at least four, so that we can then color the vertices of $C$ by 3.  
   
    Suppose to the contrary that there exist $u,v\in C$ such that  $d(u,v)<4$. Let $K_1$ ( resp. $K_2$) be the complete subgraph of order $k-1$ containing $u$ (resp $v$) in $G[\overline{S}]$. As $u$ (resp. $v$) is a non $k$-vertex and since $u$ (resp. $v$) has $(k-2)$ neighbors in $K_1$ (resp. $K_2$), then $N_{G[\overline{S}]}(u)\subset K_1$ (resp. $N_{G[\overline{S}]}(v)\subset K_2$), and $u$ (resp. $v$) has a unique neighbor in $S$. Consequently,  $u$ and $v$ are not adjacent.   Moreover, since $V(K_1)\cap V(K_2)=\emptyset$, then $u$ and $v$ have no common neighbor in $\overline{S}$. 
        We will prove now that $u$ and $v$ have no common neighbor in $S$. Suppose $u$ and $v$ have a common neighbor in $S$, say $x$. Then  $x$ is the unique neighbor of $u$ (resp. $v$) in $S$.  Regardless of the nature of $x$, we have  $S'=(S\setminus\{x\})\cup \{u,v\}$ is an independent set with $\phi(S')\geq \phi(S)-1+1.2$, a contradiction. 
    We still need to prove that a neighbor of $u$ is not adjacent to a neighbor of $v$.  Suppose to the contrary that there exists a neighbor of $u$, say $u'$, and a neighbor of $v$, say $v'$, such that $u'$ and $v'$ are adjacent. Remark that $u'$ and $v'$ cannot be both in $S$ since they are adjacent.   Without loss of generality, suppose $u'\in \overline{S}$. Then $u'\in V(K_1)$ since $N_{G[\overline{S}]}(u)\subset K_1$.  We will study first the case when  $v'$ is in $\overline{S}$.  In this case, we get  $v'\in V(K_2)$ since $N_{G[\overline{S}]}(v)\subset K_2$. Consequently, since $u'$ (resp. $v'$) has $(k-2)$ neighbors in $K_1$ (resp. $K_2$) and a neighbor in $S$, we get $u'$ and $v'$ are both $k$-vertices, a contradiction.  We still need to study the case when $v'$ is in $S$. Recall that, in this case, $v'$ is the unique neighbor of $v$ in $S$. Moreover, if $u'$ is a $k$-vertex, then a neighbor of $u'$ in  $S$ is not a  $k$-vertex since $G$ is 0-saturated. Hence $S'=(S\setminus(N(u'))\cup\{u',v\} $ is an independent set with $\phi(S')\geq \phi(S)-1+1.2$ if $u'$ is a non $k$-vertex and $\phi(S')=\phi(S)-1.2+1.6$ otherwise, a contradiction. Thus any two vertices in $C$ are at distance at least 4 from each other. Color each vertex of $C$ by 3, and so we obtain a $(1^{k-1},3)$-packing coloring of $G$.\end{proof}

\section{$t$-Saturated $k$-Degree Graphs with $1\leq t\leq k-2$}

Using an iterated version of the method of the maximum independent set used in Section 2, we prove the following result that extends the one for subcubic graphs~\cite{MT} to any maximum degree $k\ge 3$.
\begin{theorem}\label{tsat}
    Let $G$ be a $t$-saturated $k$-degree graph, $k\geq 3$ and $1\leq t\leq k-2$,  then $G$ is $(1^{k-1},2)$-packing colorable. 
\end{theorem}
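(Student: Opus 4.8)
The plan is to imitate the maximum-weight independent set argument used in the previous theorem, but to apply it iteratively, peeling off one color of type $1$ at each step while simultaneously lowering the maximum degree and the saturation level by one. The engine is a reduction lemma that I would prove first: if $G$ is a $t$-saturated $k$-degree graph with $t\ge 1$ and $S$ is an independent set maximizing $\phi(S)=|X(S)|+0.6|Y(S)|$ (with $X(S)$ the $k$-vertices of $S$ and $Y(S)$ the others, as in Section 2), then $G_1:=G[\overline{S}]$ is a $(t-1)$-saturated $(k-1)$-degree graph. Indeed, maximality forces every vertex of $\overline{S}$ to have a neighbor in $S$, so $\Delta(G_1)\le k-1$; if $u$ has degree exactly $k-1$ in $G_1$ then $u$ is a $k$-vertex of $G$ with a unique neighbor $v\in S$, and the swap $(S\setminus\{v\})\cup\{u\}$ (which would raise $\phi$ by $0.4$) shows $v$ must itself be a $k$-vertex. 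Since every $(k-1)$-vertex of $G_1$ is forced to be a $k$-vertex of $G$, and since by $t$-saturation $u$ has at most $t$ $k$-neighbors in $G$, one of which is $v\in S$, at most $t-1$ of the $G_1$-neighbors of $u$ are $(k-1)$-vertices of $G_1$; this is exactly $(t-1)$-saturation.

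Applying this reduction $t$ times and coloring the $j$-th removed set with color $1_{k-j}$, I would arrive at a $0$-saturated $(k-t)$-degree graph $G_t$, for which there remain the colors $1_1,\dots,1_{k-t-1}$ together with the single color $2$. Since $t\le k-2$ we have $k-t\ge 2$ throughout, and the inequality $t-j\le (k-j)-2$ persists, so each stage stays within the scope of the lemma. The graph $G_t$ is then finished exactly as in Section 2: when $k-t\ge 3$, the theorem of Section 2 provides a $(1^{(k-t)-1},3)$-packing coloring of $G_t$, and a $3$-packing is a fortiori a $2$-packing, giving the desired $(1^{k-t-1},2)$-coloring of $G_t$; the extremal case $k-t=2$ (where $0$-saturation forces $G_t$ to be a disjoint union of paths on at most three vertices) is treated directly. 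Splicing these with the $t$ classes removed during the reduction yields a partition of $V(G)$ into $k-1$ independent sets and one class $C$ that is a $2$-packing \emph{within} $G_t$.

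The delicate point — and the step I expect to be the main obstacle — is that $C$ must be a $2$-packing in the original graph $G$, not merely in $G_t$. Adjacency is preserved under passing to induced subgraphs, so reinserting $S_1,\dots,S_t$ creates no new edges inside $C$; but a pair $u,v\in C$ that is far apart in $G_t$ could acquire a common neighbor lying in some removed set $S_j$, collapsing their $G$-distance to $2$. I would exclude this level by level with the swap/maximality device of Section 2: if $u,v\in C$ shared a neighbor $x\in S_j$, then, because the vertices placed in $C$ are chosen to be non-$k$-vertices with a single neighbor in each removed set, the set $(S_j\setminus\{x\})\cup\{u,v\}$ would be independent with strictly larger $\phi$-value in $G_{j-1}$, contradicting the maximality of $S_j$. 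Making this bookkeeping uniform across all $t$ removed sets — in particular guaranteeing that the $C$-vertices really do have at most one neighbor in each $S_j$, and ruling out common neighbors that lie in $G_t$ itself (which would already violate the distance-$\ge 3$ property inside $G_t$) — is where the genuine work lies.

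Finally I would verify that the Brooks-type coloring at the bottom respects the degree constraints: the components of $G_t\setminus C$ avoid complete subgraphs of the forbidden order by the clique-disjointness observation of Section 2, and the residual small cases ($k-t=2$, together with the odd-cycle exception that arises only when $k-t=3$, i.e. $k=3$) are handled by hand, so that the colors $1_1,\dots,1_{k-t-1}$ indeed suffice for $G_t\setminus C$. Combined with the $t$ independent sets stripped off during the reduction, this produces the full $(1^{k-1},2)$-packing coloring of $G$.
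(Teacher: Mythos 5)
Your reduction lemma is correct and cleanly packaged: a maximum-$\phi$ independent set of a $t$-saturated $k$-degree graph does leave behind a $(t-1)$-saturated $(k-1)$-degree graph, and iterating $t$ times while assigning the colours $1_{k-t},\dots,1_{k-1}$ is legitimate. The paper works in the same spirit (it normalises to $t=k-2$ and extracts $k-2$ maximum-weight independent sets), but it never routes through Theorem 2.1; it analyses the residual graph directly (paths of length at most two) and devotes most of its effort to proving that the chosen colour-2 class is a 2-packing \emph{in $G$}.

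That last point is exactly where your proposal has a genuine gap, and the device you propose to close it does not work as stated. A vertex $u$ of the colour-3 class $C$ produced by Theorem 2.1 inside $G_t$ has degree exactly $k-t-1$ in $G_t$ and at least one neighbour in each of $S_1,\dots,S_t$, hence $d_G(u)\in\{k-1,k\}$; in particular $u$ may be a $k$-vertex of $G$ with \emph{two} neighbours in some $S_j$. Your assertion that the $C$-vertices ``are non-$k$-vertices with a single neighbour in each removed set'' is therefore unjustified: they are only non-maximum-degree in $G_t$. If $u,v\in C$ each have two neighbours in the same $S_j$ and share one of them, the swap $(S_j\setminus(N(u)\cup N(v)))\cup\{u,v\}$ removes up to three vertices of $S_j$, and the weight count can go the wrong way (for instance removed weight $1+0.6+0.6=2.2$ against added weight $2$), so no contradiction follows. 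The paper escapes precisely this configuration (the case $m=l_x=l_y$ of its Claim 3.2) because after $k-2$ rounds such a vertex already has $k-2$ forced $k$-neighbours, whence by saturation \emph{both} of its fathers in $S_{l_x}$ are non-$k$-vertices and the removed weight is at most $1.8<2$. In your setting the saturation budget is not exhausted -- the $G_t$-neighbours of $u$ need not be $k$-vertices of $G$ -- so one of the two $S_j$-fathers can carry weight $1$ and the swap yields nothing. Without a substitute structural argument at this point, the proof does not go through.
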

\begin{proof}
Without loss of generality, suppose $G$ is connected and that $t=k-2$.
Let $T$ be an independent set in $G$. We denote by   $X_1(T)$  the set of $k$-vertices  in $T$ and by $Y_1(T)$  the set of non $k$-vertices in $T$. We define $\phi_1(T)=|X_1(T)|+0.6|Y_1(T)|$.
 
 Let $S_1$ be an independent set such that $\phi_1(S_1)\geq \phi_1(T)$ for every independent set $T$. 
 Clearly, by the maximality of $\phi_1(S_1)$, each vertex in $G\setminus S_1$ has a neighbor in $S_1$. 
   We are going to introduce  a sequence of independent sets $S_2$,\dots, $S_{k-2}$ in $G\setminus S_1$, such that $S_i$ is defined starting from $i=2$ as follows: Let $T$ be an independent set in $G[V(G)\setminus(S_1\cup\dots\cup S_{i-1}) ]$. We denote by   $X_i(T)$  the set of $k$-vertices  in $T$ and by $Y_i(T)$  the set of non $k$-vertices in $T$. We define $\phi_i(T)=|X_i(T)|+0.6|Y_i(T)|$.  $S_i$ is defined to be an independent set in $G[V(G)\setminus(S_1\cup\dots \cup S_{i-1}) ]$ such that  
 $\phi_i(S_i)\geq \phi_i(T)$ for every independent set $T$ in $G[V(G)\setminus(S_1\cup\dots \cup S_{i-1}) ]$. 

Let $S= S_1\cup \dots \cup S_{k-2}$  and let $\overline{S}= V(G)\setminus (S_1\cup \dots \cup S_{k-2})$. We have the following observation:
\begin{remark}\label{r3.1}
\begin{enumerate}
\item By the maximality of $\phi_j(S_j)$, each vertex in $G[V(G)\setminus(S_1\cup\dots \cup  S_{i}) ]$ has a neighbor in $S_j$ for every $j$,  $1\leq j\leq i$ and $1\leq i\leq k-2$.

\item By (1), we can deduce that each $m$-vertex in $\overline{S}$, $m\leq k-2$, has no neighbor in $\overline{S}$, each $(k-1)$-vertex in $\overline{S}$ has at most one neighbor in $\overline{S}$ and each $k$-vertex in $\overline{S}$ has at most two neighbors in $\overline{S}$. \end{enumerate}
\end{remark}
For a $k$-vertex in $\overline{S}$, we have the following result:
\begin{claim}\label{c3.1.1}
 Let $x$ be a $k$-vertex in $\overline{S}$, then:\begin{enumerate}
     \item If $x$ has a unique neighbor, say $u$,  in $S_i$ for some $i$, $1\leq i\leq k-2$, then $u$ is a $k$-vertex. 
     \item If $x$ has two neighbors in $\overline{S}$, then each neighbor of $x$ in $\overline{S}$ is a non $k$-vertex.
 \end{enumerate}
 \end{claim}
 \begin{proof}
     \begin{enumerate}
     \item Suppose to the contrary that $u$ is a non $k$-vertex, then  $S'_i=(S_i\setminus\{u\})\cup\{x\}$ is an independent set in $G[V(G)\setminus(S_1\cup\dots\cup S_{i-1}) ]$ with $\phi_i(S'_i)= \phi_i(S_i)-0.6+1$, a contradiction.  
    \item  Since $x$ has two neighbors in  $\overline{S}$, then $x$ has a unique neighbor in $S_i$ for every $i$, $1\leq i\leq k-2$.  By (1), the neighbor of $x$ in $S_i$ is a $k$-vertex for every $i$, $1\leq i\leq k-2$. Thus each neighbor of $x$ in $\overline{S}$ is a non $k$-vertex since $G$ is $(k-2)$-saturated. \end{enumerate} 
 \end{proof}

 By Claim~\ref{c3.1.1} and Remark~\ref{r3.1}, one can deduce that $G[\overline{S}]$ contains no cycle and each path in $G[\overline{S}]$ is a path of length at most two. Moreover, we can distinguish four types of maximal paths of length at least one in $G[\overline{S}]$ (see Figure~\ref{fig:types}):
\begin{enumerate}
\item A maximal path of length one whose both ends are $(k-1)$-vertices, and this type will be denoted by $\mathcal{P}_1$.
\item A maximal path of length one whose one of its ends is a $k$-vertex and the other is a $(k-1)$-vertex, and this type will be denoted by $\mathcal{P}_2$.
\item A maximal path of length one whose both ends are $k$-vertices, and this type will be denoted by $\mathcal{P}_3$. 
\item A maximal path of length two whose both ends are $(k-1)$-vertices, and this type will be denoted by $\mathcal{P}_4$. 
\end{enumerate}

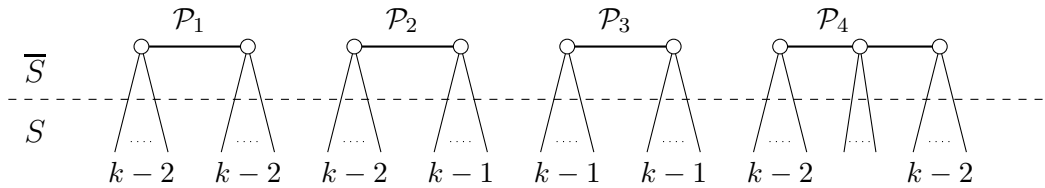
\begin{figure}[h]
\begin{center}
\begin{tikzpicture}[scale=.7]
\node at (-1,2.6) {$\overline{S}$};
\draw[dashed] (-1.5,2)--(18,2);
\node at (-1,1.4) {$S$};

\node at (1,3) (b) [circle,draw=black,fill=none,scale=0.5]{};\node at (1.9,3.5) {\small $\mathcal{P}_1$}; 
\node at (1,.6) {\small $k-2$}; \node at (3,.6) {\small $k-2$};
\node at (3,3) (c) [circle,draw=black,fill=none,scale=0.5]{};
\draw (0.5,1)-- (b) -- (1.5,1);  \draw[thick] (b) -- (c); \draw (3.5,1) -- (c) -- (2.5,1); 
\draw[dotted] (0.8,1.2) -- (1.2,1.2);\draw[dotted] (2.8,1.2) -- (3.2,1.2);

\node at (5,3) (b) [circle,draw=black,fill=none,scale=0.5]{};\node at (5.9,3.5) {\small $\mathcal{P}_2$}; 
\node at (5,.6) {\small $k-2$}; \node at (7,.6) {\small $k-1$};
\node at (7,3) (c) [circle,draw=black,fill=none,scale=0.5]{};
\draw (4.5,1)-- (b) -- (5.5,1);  \draw[thick] (b) -- (c); \draw (7.5,1) -- (c) -- (6.5,1); 
\draw[dotted] (4.8,1.2) -- (5.2,1.2);\draw[dotted] (6.8,1.2) -- (7.2,1.2);

\node at (9,3) (b) [circle,draw=black,fill=none,scale=0.5]{};\node at (9.9,3.5) {\small $\mathcal{P}_3$}; 
\node at (9,.6) {\small $k-1$}; \node at (11,.6) {\small $k-1$};
\node at (11,3) (c) [circle,draw=black,fill=none,scale=0.5]{};
\draw (8.5,1)-- (b) -- (9.5,1);  \draw[thick] (b) -- (c); \draw (11.5,1) -- (c) -- (10.5,1); 
\draw[dotted] (8.8,1.2) -- (9.2,1.2);\draw[dotted] (10.8,1.2) -- (11.2,1.2);
 
\node at (13,3) (h) [circle,draw=black,fill=none,scale=0.5]{};\node at (14,3.5) {\small $\mathcal{P}_4$};
\node at (13,.6) {\small $k-2$}; \node at (16,.6) {\small $k-2$}; 
\node at (14.5,3) (i) [circle,draw=black,fill=none,scale=0.5]{};
\node at (16,3) (j) [circle,draw=black,fill=none,scale=0.5]{};
\draw (12.5,1)-- (h) -- (13.5,1); \draw (14.2,1) -- (i) -- (14.8,1); \draw[thick] (h) -- (i) -- (j); \draw (16.5,1) -- (j) -- (15.5,1); 
\draw[dotted] (12.8,1.2) -- (13.2,1.2); \draw[dotted] (14.3,1.2) -- (14.7,1.2); \draw[dotted] (15.8,1.2) -- (16.2,1.2);
\end{tikzpicture}
\end{center}
\caption{The different types of maximal paths in $G[\overline{S}]$.}
 \label{fig:types}   
\end{figure}

Note that the interior vertex of a maximal path of type $\mathcal{P}_4$ is a $k$-vertex. We define the set $E$ to be a subset of $\overline{S}$ such that $E$ contains:\begin{itemize}
\item one and only one end vertex of each maximal path of type $\mathcal{P}_1$ and type $\mathcal{P}_3$,
\item the end vertex, which is a $k$-vertex, of each maximal path of type $\mathcal{P}_2$,
\item every end vertex of a path of type $\mathcal{P}_4$, and 
\item every isolated vertex of $G[\overline{S}]$.
\end{itemize}

Clearly, by the maximality of the paths, $E$ is an independent set of $G$. Moreover, $E'= \overline{S}\setminus E$ is also an independent set of $G$. We have the following remark:
\begin{remark}\label{rk3.2}
\begin{enumerate}
    \item  Let $x$ be an end vertex of a maximal path of type $\mathcal{P}_3$. Then $x$ is a $k$-vertex and it has a unique neighbor in $S_i$ for every $i$, $1\leq i\leq k-2$, except one. Set $l_x\in \{1,\dots, k-2\}$ such that $x$ has two neighbors in $S_{l_x}$. By Claim \ref{c3.1.1}, each unique neighbor of $x$ in $S_i$, $1\leq i\neq l_x\leq k-2$, is a $k$-vertex. Hence $x$ is a $k$-vertex having $(k-2)$ $k$-neighbors in $G$ and the two neighbors of $x$ in $S_{l_x}$ are both non $k$-vertices. 
     \item If $x$ is the interior vertex of a maximal path of type $\mathcal{P}_4$, then $x$ has two neighbors in $\overline{S}$ and so $x$ has a unique neighbor in $S_i$ for every $i$, $1\leq i\leq k-2$. Hence by Claim \ref{c3.1.1}, each neighbor of $x$ in $S_i$ is  a $k$-vertex for every $i$, $1\leq i\leq k-2$. \end{enumerate}
\end{remark}

 Color each vertex in $S_i$ by $1_i$ for every $i$, $1\leq i\leq k-2$, and color each vertex in $E$ by $1_{k-1}$.  We still need to color the vertices of $E'$, and this step will be possible thanks to the following claim:
 \begin{claim}
     The distance between any two vertices in $E'$ is at least three.
 \end{claim}
\begin{proof}
    Let $x$ and $y$ be two vertices in $E'$. Since $x$ and $y$ are not on the same maximal path, then $x$ and $y$ are not adjacent. Thus we only need to prove that $x$ and $y$ have no common neighbor. Suppose to the contrary that $x$ and $y$ have  a common neighbor, say $u$. Clearly, since $x$ and $y$ are not on the same maximal path in $G[\overline{S}]$, then $u$ is  in $S$. Hence there exists $m$, $1\leq m\leq k-2$, such that $u\in S_m$. We will consider the independent set $S'_m=(S_m\setminus (N(x)\cup N(y))\cup \{x,y\}$ in $G[V(G)\setminus (S_1\cup \dots \cup S_{m-1})]$ during our treatment of each of the following cases:
    \begin{enumerate}
        \item $x$ and $y$ are both $(k-1)$-vertices.\\
         As $x$ (resp. $y$) is a $(k-1)$-vertex, then $x$ (resp. $y$) is an end vertex of a path of type $\mathcal{P}_1$ or $\mathcal{P}_2$. Consequently, $x$ (resp. $y$) has a unique neighbor in $S_i$ for every $i$, $1\leq i\leq k-2$. Thus $(N(x)\cup N(y))\cap S_m=\{u\}$,  and so $\phi_m(S'_m)\geq \phi_m(S_m)-1+1.2$, a contradiction. 
        \item $x$ is a $(k-1)$-vertex and $y$ is a $k$-vertex.\\
        As above, $u$ is the unique neighbor of $x$ in $S_m$. We will distinguish two cases according to the number of neighbors of $y$ in $\overline{S}$:
        \begin{enumerate}
            \item $y$ is an end vertex of a path of type $\mathcal{P}_3$. \\
               If $m\neq l_y$, then $u$ is the unique neighbor of $y$ in $S_m$, and so $u$ is a $k$-vertex by Claim \ref{c3.1.1}. Thus  $\phi_m(S'_m)=\phi_m(S_m)-1+1.6$, a contradiction. If $m=l_y$, then $u$ is one of the two neighbors of $y$ in $S_m$ which are both non $k$-vertices by Remark~\ref{rk3.2} (1). Therefore,  $\phi_m(S'_m)=\phi_m(S_m)-1.2+1.6$, a contradiction.
            \item $y$ is the interior vertex of a path of type $\mathcal{P}_4$.\\
             In this case, $N(y)\cap S_m=\{u\}$ since $y$ has a unique neighbor in $S_i$ for every $i$, $1\leq i\leq k-2$. By Remark~\ref{rk3.2} (2), the neighbor of $y$ in $S_i$ is a $k$-vertex for every $i$, $1\leq i\leq k-2$, and so $\phi_m(S'_m)=\phi_m(S_m)-1+1.6$, a contradiction.
            \end{enumerate}
        \item $x$ and $y$ are both $k$-vertices.\\
         We need here to consider three cases:
        \begin{enumerate}
            \item $x$ and $y$ are both end vertices of a maximal path of type $\mathcal{P}_3$.\\
              If $m\notin \{l_x, l_y\}$, then $u$ is the unique neighbor of $x$ and $y$ in $S_m$. By Claim \ref{c3.1.1}, $u$ is a $k$-vertex and so $\phi_m(S'_m)=\phi_m(S_m)-1+2$, a contradiction. If $m=l_x$ and $l_x\neq l_y$, then $u$ is the unique neighbor of $y$ in $S_m$, and so $u$ is a $k$-vertex. But this contradicts the fact that both neighbors of $x$  in $S_{l_x}$ are non $k$-vertices. In the same way, we can reach a contradiction for the case $m=l_y$ and $l_x\neq l_y$. Finally, for the case $m=l_x=l_y$, $x$ (resp. $y$) has two neighbors in $S_m$ that are both non $k$-vertices by Remark~\ref{rk3.2} (1). In this case, $\phi'(S'_m)\geq \phi(S_m)-3*0.6+2$, a contradiction. 
            \item $x$ is an end vertex of a path of type $\mathcal{P}_3$ and $y$ is the interior vertex of a path of type $\mathcal{P}_4$.\\
             By Remark~\ref{rk3.2} (2), $u$ is the unique neighbor of $y$ in $S_m$ and $u$ is a $k$-vertex. Thus $m\neq l_x$ by Remark~\ref{rk3.2} (1). Consequently, $u$ is the unique neighbor of $x$ in $S_m$. Hence  $\phi'(S'_m)=\phi(S_m)-1+2$, a contradiction.
            \item $x$ and $y$ are both interior vertices of a maximal path of type $\mathcal{P}_4$. \\
             Hence $u$ is the unique  neighbor of $x$ (resp. $y$) in $S_m$ and $u$ is a $k$-vertex by Claim \ref{c3.1.1}. Thus  $\phi'(S'_m)=\phi(S_m)-1+2$, a contradiction.
        \end{enumerate}
        \end{enumerate}
        \end{proof}
     
     Finally, color the vertices of $E'$ by 2 and so we obtain a $(1^{k-1},2)$-packing coloring of $G$.    
\end{proof}

We will see in Section~\ref{s:concl} that the result of Theorem~\ref{tsat} is tight.
    
\section{$(k-1)$-Saturated $k$-Degree Graphs}  

We now turn our attention to the case $t=k-1$; i.e., to $(k-1)$-saturated $k$-degree graphs. We are going to use a technique similar to the one of the proof of Theorem~\ref{tsat}, but with a different weight function on the independent sets. We are then obliged to use $k-2$ more colors 2 than for the case $t\le k-2$.

\begin{theorem}\label{k-1sat}
    Let $G$ be a $(k-1)$-saturated $k$-degree graph, $k\geq 4$,  then $G$ is $(1^{k-1},2^{k-1})$-packing colorable. 
\end{theorem}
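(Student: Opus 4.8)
```latex
\textbf{Proof plan.}
The plan is to mimic the iterated maximum-independent-set method of Theorem~\ref{tsat}, but with a weight function tuned to the $(k-1)$-saturated setting. First I would peel off $k-2$ independent sets $S_1,\dots,S_{k-2}$, each chosen to maximize a weight $\phi_i(T)=|X_i(T)|+c|Y_i(T)|$ on the remaining graph $G[V(G)\setminus(S_1\cup\dots\cup S_{i-1})]$, where $X_i$ counts $k$-vertices and $Y_i$ counts non-$k$-vertices, and where the constant $c$ is chosen appropriately (the paper's remark that we need $k-2$ \emph{extra} colors $2$ signals that the weight tradeoffs no longer force the clean structure of Theorem~\ref{tsat}). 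As before, maximality of each $\phi_j(S_j)$ forces every vertex of $G[V(G)\setminus(S_1\cup\dots\cup S_i)]$ to have a neighbor in each $S_j$, $j\le i$, so that in $\overline{S}=V(G)\setminus(S_1\cup\dots\cup S_{k-2})$ every $m$-vertex with $m\le k-2$ is isolated, every $(k-1)$-vertex has at most one neighbor, and every $k$-vertex has at most two neighbors in $\overline{S}$. I expect an analogue of Claim~\ref{c3.1.1} to hold, constraining the nature of the neighbors of $k$-vertices in each $S_i$.

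The crucial difference is that $(k-1)$-saturation is far weaker than $(k-2)$-saturation: a $k$-vertex may now be adjacent to $k-1$ other $k$-vertices. Consequently the exchange arguments that in Theorem~\ref{tsat} guaranteed the components of $G[\overline{S}]$ were short paths will no longer yield so tight a structure, and the end-vertex selection trick producing a single independent set $E$ with $E'=\overline{S}\setminus E$ also independent will no longer suffice. Rather than coloring $E'$ with one color $2$, I would color the structured remainder $G[\overline{S}]$ using the colors $1_{k-1}$ together with $2_1,\dots,2_{k-1}$. The natural route is to show that $G[\overline{S}]$ has bounded structure (components that are paths and cycles, by Remark~\ref{r3.1}(2) the maximum degree of $G[\overline{S}]$ is at most $2$), hence $G[\overline{S}]$ is a disjoint union of paths and cycles, which I would then color so that same-colored vertices receiving a color $2_j$ are at distance at least $3$ in $G$ (not merely in $G[\overline{S}]$).

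Concretely, I would first assign $1_{k-1}$ to a suitable independent subset of $\overline{S}$, reducing what remains to an induced subgraph of maximum degree at most $1$ (a matching plus isolated vertices) or small components; then I would distribute the colors $2_1,\dots,2_{k-1}$ across the path/cycle components so that each color class is a distance-$2$ packing in $G$. To verify a given color class $V_{2_j}$ is valid I must check that any two of its vertices are at distance $\ge 3$ in $G$: adjacency is excluded by construction, and a common neighbor $u$ would lie in some $S_m$, at which point I would run the swap $S'_m=(S_m\setminus(N(x)\cup N(y)))\cup\{x,y\}$ and derive a contradiction with the maximality of $\phi_m(S_m)$, exactly as in the proof of the previous claim; the saturation hypothesis and the analogue of Claim~\ref{c3.1.1} supply that the displaced neighbors of $x,y$ in $S_m$ have small weight while $x,y$ together have larger weight. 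The main obstacle I anticipate is the bookkeeping of \emph{how many} colors $2_j$ are genuinely needed: I must argue that $k-1$ colors of type $2$ are enough to properly packing-color the union of paths and cycles in $G[\overline{S}]$ while respecting the distance-$3$ requirement in $G$, and in particular handle the cycle components and the worst-case $k$-vertices (which under $(k-1)$-saturation can have two $\overline{S}$-neighbors that are themselves $k$-vertices). Getting the weight constant $c$ right so that every problematic local configuration triggers a weight-increasing swap is where the delicate casework lives, and it is the step I would develop most carefully.
```
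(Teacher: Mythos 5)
Your setup matches the paper's: iterated maximum-weight independent sets $S_1,\dots,S_{k-2}$ with weight $\phi_i(T)=|X_i(T)|+c\,|Y_i(T)|$ (the paper takes $c=0.4$ here, versus $0.6$ in Theorem~\ref{tsat}), the resulting degree bounds in $\overline{S}$, and an analogue of Claim~\ref{c3.1.1}. But the proposal stops exactly where the proof begins. The entire difficulty of this theorem is to show that the vertices not reachable by the colors $1_1,\dots,1_{k-1}$ can be packed into $k-1$ classes of color $2$, and you explicitly defer this (``I must argue that $k-1$ colors of type $2$ are enough \dots it is the step I would develop most carefully''). No mechanism is proposed for it. The paper's mechanism has two components, both absent from your plan. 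First, after assigning $1_{k-1}$ to an independent set $E$ built from the maximal paths of $G[\overline{S}]$ (which are shown, using $(k-1)$-saturation and the father claim, to be cycle-free paths of length at most $3$ --- your ``union of paths \emph{and cycles}'' is already weaker than what is needed and provable), one performs a \emph{recoloring} step: whenever some vertex $x\in\overline{S}\setminus E$ has an index $i$ all of whose $i$-fathers are ``good'' (have a spare color among $1_1,\dots,1_{k-1}$), those fathers are recolored and $x$ receives $1_i$. This shrinks the uncolored set to a set $E'$ in which every vertex has a \emph{bad} $i$-father for every $i$; badness forces such a father to have at most two neighbors in $E'$, which is what controls the number of siblings of each vertex.

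Second, the colors $2_1,\dots,2_{k-1}$ are not distributed by hand along paths and cycles; instead one proves $\Delta(G^2[E'])\le k-1$ and that $G^2[E']$ contains no $K_k$, and invokes Brooks' Theorem on $G^2[E']$. This is the step that actually delivers the count $k-1$, and it rests on a chain of sibling-counting claims (a $(k-1)$-vertex in $E'$ has at most one $i$-sibling per index; a $k$-vertex in $E'$ has siblings only for the single index $l_x$ where it has two fathers; a $k$-vertex cannot have $k$ siblings because that would force some father in $S_{l_x}$ to be a $k$-vertex all of whose neighbors are $k$-vertices, contradicting $(k-1)$-saturation). Your alternative --- coloring path/cycle components directly and verifying each class by an exchange argument --- does not by itself bound how many classes you need: without the bad-father structure a single vertex of $E'$ can a priori have far more than $k-1$ vertices of $E'$ within distance $2$ in $G$ (its fathers in $S_1$ are not constrained to have few $\overline{S}$-neighbors), so the greedy/da hoc distribution you describe has no reason to close with $k-1$ colors. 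In short: the framework is right, but the proof's actual content --- the good-father recoloring, the degree and clique bounds on $G^2[E']$, and the appeal to Brooks' Theorem on that square graph --- is missing, and without some such mechanism the argument does not go through.
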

\begin{proof}
Without loss of generality, suppose $G$ is a connected $(k-1)$-saturated $k$-degree graph.
Let $T$ be an independent set in $G$. We denote by   $X_1(T)$  the set of $k$-vertices  in $T$ and by $Y_1(T)$  the set of non $k$-vertices in $T$. We define $\phi_1(T)=|X_1(T)|+0.4|Y_1(T)|$.
 
 Let $S_1$ be an independent set such that $\phi_1(S_1)\geq \phi_1(T)$ for every independent set $T$ in $G$. 
 Clearly, by the maximality of $\phi_1(S_1)$, each vertex in $V(G)\setminus S_1$ has a neighbor in $S_1$. 
   We are going to introduce  a sequence of independent sets $S_2,\dots,S_{k-2}$ in $G\setminus S_1$ such that $S_i$ is defined starting from $i=2$ as follows: Let $T$ be an independent set in $G[V(G)\setminus(S_1\cup\dots\cup S_{i-1}) ]$. We denote by   $X_i(T)$  the set of $k$-vertices  in $T$ and by $Y_i(T)$  the set of non $k$-vertices in $T$. We define $\phi_i(T)=|X_i(T)|+0.4|Y_i(T)|$. Let $S_i$ be an independent set in $G[V(G)\setminus(S_1\cup\dots \cup S_{i-1}) ]$ such that 
 $\phi_i(S_i)\geq \phi_i(T)$ for every independent set $T$ in $G[V(G)\setminus(S_1\cup\dots \cup S_{i-1}) ]$. \\
 Clearly, by the maximality of $\phi_i(S_i)$, $1\leq i \leq k-2$, each vertex in $G[V(G)\setminus(S_1\cup\dots \cup  S_{i}) ]$ has a neighbor in $S_j$ for every $j$, $1\leq j< i$. Let $S=S_1\cup\dots \cup S_{k-2}$ and let $\overline{S}=V(G)\setminus (S_1\cup \dots S_{k-2})$. Let $u$ be a vertex in $\overline{S}$ and let $v$ be a vertex in  $S_i$ for some $i$, $1\leq i\leq k-2$. We say $v$ is an {\em $i$-father} of $u$ if   $v$ and $u$ are adjacent. In general, if $u$ is a vertex in $\overline{S}$ and $v$ is a vertex in $S$ such that $u$ and $v$ are adjacent, then we say $v$ is a father of $u$.  We have the following result:
\begin{claim}\label{cl3.2.1}
 If $u$ is a $k$-vertex in $\overline{S}$ such that $u$ has a unique $i$-father, say $v$,  for some $i$, $1\leq i\leq k-2$, then $v$ is a $k$-vertex.
 \end{claim}
 \begin{proof}
     Suppose to the contrary that $v$ is a non $k$-vertex, then  $S'_i=(S_i\setminus\{v\})\cup\{u\}$ is an independent set in $G\setminus (S_1\cup \dots... \cup S_{i-1})$ with $\phi_i(S'_i)= \phi_i(S_i)-0.4+1$, a contradiction.  
 \end{proof}
\begin{remark}~\label{rkn}
Clearly, each $m$-vertex in $\overline{S}$, $m\leq k-2$, has no neighbor in $\overline{S}$, each $(k-1)$-vertex has at most one neighbor in $\overline{S}$ and each $k$-vertex has at most two neighbors in $\overline{S}$. Moreover, at most one neighbor of a $k$-vertex in $\overline{S}$ is a $k$-vertex. In fact, suppose that $x$ is a $k$-vertex in $\overline{S}$ such that two of the neighbors of $x$ in $\overline{S}$ are $k$-vertices. In this case, $x$ has a unique $i$-father  for every $i$, $1\leq i\leq k-2$.  Then since $G$ is $(k-1)$-saturated, there exists $i$, $1\leq i\leq k-2$, such that the $i$-father of $x$ in $S_i$ is a non $k$-vertex, a contradiction by Claim~\ref{cl3.2.1} \end{remark}
 Thus we can deduce that $G[\overline{S}]$ contains no cycle and each maximal path in $G[\overline{S}]$ is a path of length at most three.
 Note that, by Remark~\ref{rkn}, each interior vertex of a maximal path of length 2 or 3 is a $k$-vertex.
Consequently, each end vertex of a path of length 3 is a $(k-1)$-vertex. We define the set $E$ to be a subset of $\overline{S}$ such that $E$ contains:
 \begin{itemize}
 \item each isolated vertex of $G[\overline{S}]$;
\item exactly one vertex of each maximal path of length one, the one of degree $k$ if exists;
\item the two end vertices of each maximal path of length 2; 
\item exactly one end vertex and one interior vertex of each path of length 3 such that the two chosen vertices are not adjacent. 
\end{itemize}
Clearly, by the maximality of the paths and by the definition of the vertices in $E$, $E$ is an independent set. For the same reason, $\overline{S}\setminus E$ is an independent set.
 Color each vertex in $S_i$ by $1_i$, $1\leq i\leq k-2$, and color each vertex in $E$ by $1_{k-1}$.  
 For each vertex $x$ in $G$,  we will denote by $c(x)$ the color of $x$ if $x$ is colored and we define $C_1(x)=\{c(v): v\; \text{is a colored neighbor of }\; x\}$. We still need to color the vertices of $\overline{S}\setminus E$, but first we have the following remark:
 \begin{remark}\label{rk3.4} Let $x$ be a vertex in $\overline{S}\setminus E$, then: \begin{enumerate}
     \item If $x$ is a  $(k-1)$-vertex, then $x$ has a unique neighbor in $\overline{S}$, and so $x$ has a unique $i$-father for each $i$, $1\leq i\leq k-2$.
     \item If $x$ is a  $k$-vertex, then  $x$ is either an end vertex  or an interior vertex of a maximal path in $G[\overline{S}]$.  
     If $x$ is an end vertex, then $x$ has a unique neighbor in $\overline{S}$, which is a $k$-vertex. Thus $x$ has a unique $i$-father for every $i$, $1\leq i\leq k-2$, but except one. Let $l_x$, $1\leq l_x \leq k-2$, be such that $x$ has two $l_x$-fathers. At most one of the two $l_x$-fathers is a $k$-vertex. In fact, $x$ has a unique $i$-father, $1\leq i\neq l_x\leq k-2$, and so, by Claim~\ref{cl3.2.1}, each $i$-father of $x$, $i\neq l_x$, is a $k$-vertex.  Nevertheless, one of the $l_x$-fathers of $x$ is a $k$-vertex. Actually, if each $l_x$-father of $x$ is a non $k$-vertex, then $S'_{l_x}=(S_{l_x}\setminus N(x))\cup\{x\}$ is an independent set in $G\setminus (S_1\cup \dots... \cup S_{l_x-1})$ with $\phi_{l_x}(S'_{l_x})= \phi_{l_x}(S_{l_x})-2*0.4+1$, a contradiction. 
     
     If  $x$ is an interior vertex of a maximal path in $G[\overline{S}]$, then $x$ has two neighbors in $\overline{S}$. Accordingly, $x$ has a unique $i$-father for every $i$, $1\leq i\leq k-2$. Thus an $i$-father of $x$ is a $k$-vertex for every $i$, $1\leq i\leq k-2$. 
 \end{enumerate}
 \end{remark}
 
  Two vertices $x$ and $y$ in $\overline{S}\setminus E$ are said to be {\em siblings} if $x$ and $y$ have a common father in $S$.  In this case, we say $x$ is a sibling of $y$. More precisely, for $i\in \{1,\dots, k-2\}$, we say $x$ is an $i$-sibling of $y$ if $x$ and $y$ have a common $i$-father. An  $i$-father $u$ of a vertex in $\overline{S}\setminus E$ is said to be a {\em good} $i$-father if $\{1_1,\dots, 1_{k-1}\}\setminus (C_1(u)\cup \{1_i\})\neq \emptyset$. Otherwise, we say $u$ is a {\em bad} $i$-father. \\
 
  Let $x\in \overline{S}\setminus E$. If there exists $i$, $1\leq i\leq k-2$, such that each $i$-father of $x$ is a good $i$-father, then recolor each $i$-father $u$ of $x$, by $1_j$  and then color $x$ by $1_i$, where $1_j\in \{1_1,\dots, 1_{k-1}\}\setminus (C_1(u)\cup \{1_i\})$. If all the vertices in $\overline{S}\setminus E$ are colored this way, then we are done. Else, let $E'$ be the set of uncolored vertices in $\overline{S}\setminus E$.  \\
 
  Let $x$ be a vertex in $E'$, then $x$ has at least one bad $i$-father for every $i$, $1\leq i\leq k-2$. If $x$ is a $(k-1)$-vertex, then, by Remark~\ref{rk3.4}, each $i$-father of $x$ is a bad father for every $i$, $1\leq i\leq k-2$. On the other hand, if $x$ is a $k$-vertex, then, by Remark~\ref{rk3.4}, each father of $x$ is a bad father except at most one. Note that if $u$ is a bad $i$-father of $x$ for some $i$, $1\leq i\leq k-2$, then  $C_1(u)=\{1_1,\dots, 1_{k-1}\}\setminus\{1_i\}$. Consequently, $u$ has at most two neighbors in $E'$.\\
 
   In order to color the vertices of $E'$, we are going to prove that $\Delta (G^2[E'])\leq k-1$ and $G^2[E']$ contains no complete subgraph of order $k$, where $G^2[E']$ is the subgraph of the square graph of $G$ induced by $E'$. In fact, this means that $G^2[E']$ is $(k-1)$-colorable by Brooks' Theorem and so the vertices of $E'$ can be colored by $2_1,\dots, 2_{k-1}$. To reach the desired result, we need the following sequence of Claims:

 \begin{claim}\label{cm3.2.2}
     Let $x$ and $y$ be two  vertices in $E'$ such that $x$ is an $i$-sibling of $y$ for some $i$, $1\leq i\leq k-2$, then:\begin{enumerate}
         \item If $x$ is a  $(k-1)$-vertex, then $y$ is the unique  $i$-sibling of $x$ in $E'$.
         \item If $x$ is a $k$-vertex and $y$ is a  $(k-1)$-vertex, then $x$  has two $i$-fathers. 
         \item If $x$ and $y$ are both  $k$-vertices, then $x$ (resp. $y$) has two $i$-fathers. Moreover, the common $i$-father of $x$ and $y$ is not a $k$-vertex and not a bad one
     \end{enumerate}
 \end{claim}
\begin{proof}
\begin{enumerate}
    \item Let $u$ be the common $i$-father of $x$ and $y$. Since $u$ is the unique $i$-father of $x$, then $u$ is a bad father, and so $(\{1_1,\dots,1_{k-1}\}\setminus \{1_i\})\subseteq C_1(u)$. Consequently, $u$ has no  neighbor in $E'$ other than $x$ and $y$. Hence $y$ is the unique $i$-sibling of $x$ in $E'$.
   
    \item Suppose to the contrary that $x$ has a unique $i$-father, say $u$. Then $S'_i=(S_i\setminus \{u\})\cup \{x,y\})$ is an independent set in $G\setminus \{S_1,\dots, S_{i-1}\}$ with $\phi_i(S'_i)\geq \phi(S_i)-1+1.4$, a contradiction.

    \item Suppose to the contrary that $x$ has a unique $i$-father, then $|(N(x)\cup N(y))\cap S_i|\leq 2$. Recall that, by Remark~\ref{rk3.4} (2),  only one of the $i$-fathers of $y$ is a $k$-vertex.  Then  $S'_i=(S_i\setminus ( N(y))\cup \{x,y\}$ is an independent set in  $G\setminus (S_1\cup \dots... \cup S_{i-1})$ with $\phi_i(S'_i)=\phi_i(S_i)-1.4+2$, a contradiction. In the same way, we can prove that $y$ has two $i$-fathers.\\
     Suppose to the contrary that the common $i$-father of $x$ and $y$ is a $k$-vertex, then the other $i$-father of $x$ (resp. $y$) is  not a $k$-vertex by Remark~\ref{rk3.4} (2). Hence $S'_i=(S_i\setminus (N(x)\cup N(y))\cup \{x,y\}$ is an independent set in $G\setminus \{S_1,\dots, S_{i-1}\}$ with $\phi_i(S'_i)=\phi_i(S_i)-1.8+2$, a contradiction. 

Let $u$ be the common $i$-father of $x$ and $y$ and suppose to the contrary that $u$ is a bad $i$-father. Since $u$ is a bad father, then $(\{1_1,\dots,1_{k-1}\}\setminus \{1_i\})\subseteq C_1(u)$. Consequently, $u$ is a $k$-vertex, a contradiction. \end{enumerate}
\end{proof}

\begin{remark}\label{rk3.5} Let $x$ be a vertex in $E'$. We have the following observations:  \begin{enumerate}
\item If $x$ is a $k$-vertex such that $x$ is an interior vertex of a maximal path in $G[\overline{S}]$, then $x$ has no sibling in $E'$. In fact, by Remark~\ref{rk3.4} (2), $x$ has a unique $i$-father for every $i$, $1\leq i\leq k-2$. Thus by  Claim~\ref{cm3.2.2} (2,3), $x$ has no sibling in $E'$. Consequently $x$ is at distance at least three from each vertex in $E'$ except at most one. This exception is due to the fact that $x$ may be an interior vertex of a maximal path of length three and so one of the end vertices of this maximal path  may belong to $E'$. Consequently, $x$ has at most one neighbor in $G^2[E']$.
\item If $x$ is a  $(k-1)$-vertex, then  by Claim~\ref{cm3.2.2} (1), $x$ has at most $(k-2)$  siblings in $E'$ since $x$ has at most one $i$-sibling in $E'$ for every $i$, $1\leq i\leq k-2$. In addition, if $x$ is an end vertex of a path of length three and an interior vertex of this path belongs to $E'$, then $x$ and this interior vertex are neighbors in $G^2[E']$. Consequently, each  $(k-1)$-vertex in $E'$ has at most $(k-1)$ neighbors in $G^2[E']$. Besides, if $x$ is not an end vertex of a maximal path of length three then $x$ has at most $(k-2)$ neighbors in $G^2[E']$. 
\item  If $x$ is a  $k$-vertex that has an $i$-sibling in $E'$ for some $i$, $1\leq i\leq k-2$, then $i=l_x$. Moreover, $x$ has no $j$-sibling for every $j\neq i$, $1\leq j\leq k-2$, since $x$ has a unique $j$-father. 
\end{enumerate}\end{remark}

In addition, we have the following important result:
\begin{claim}
Each $k$-vertex in $E'$ has at most $(k-1)$ siblings in $E'$.
\end{claim}
\begin{proof}
Let $x$ be a $k$-vertex in $E'$ and suppose to the contrary that $x$ has $k$  siblings in $E'$. Then by Remark~\ref{rk3.5} (3), each sibling of $x$ in $E'$ is an  $l_x$-sibling of $x$.  Consequently, by Claim~\ref{cm3.2.2} (1)  and since $k\geq 4$, $x$ has at most one sibling which is a $(k-1)$-vertex. In fact, if $x$ has two siblings that are $(k-1)$-vertices, say $y$ and $z$, then, by Claim~\ref{cm3.2.2} (1), $x$ is the unique sibling of $y$ (resp. $z$), and so the common $l_x$-father of $x$ and $y$ is distinct from that of $x$ and $z$. But then we get that both $l_x$-fathers of $x$ are $k$-vertices since each father of a $(k-1)$-vertex in $E'$ is bad, a contradiction.  Let $x_1,\dots, x_k$ be the  $l_x$-siblings of $x$ in $E'$ and let $u$ and $v$ be the $l_x$-fathers of $x$. Without loss of generality, suppose that $u$ is a bad $l_x$-father of $x$ and $x_2,\dots, x_k$ are all $k$-vertices. By Claim~\ref{cm3.2.2} (3), $u$ is not an $l_x$-father of $x_i$ for every $i$, $2\leq i\leq k$. Thus  $v$ is the $l_x$-father of $x$ and $x_i$ for every $i$, $2\leq i\leq k$. Hence $v$ is a $k$-vertex whose neighbors are all $k$-vertices, a contradiction.     
\end{proof}

 Thus $\Delta(G^2[E'])\leq k-1$. We are in front of the final step:
 
\begin{claim}
  $G^2[E']$ has no complete subgraph on $k$ vertices.   
\end{claim}
\begin{proof}
 Suppose that $G^2[E']$ has a complete subgraph $K$ on $k$ vertices.  We will prove first that each vertex of $K$ is a $k$-vertex. Suppose to the contrary that $K$ has a $(k-1)$-vertex, say $x$. Since $x$ has $k-1$ neighbors in $G^2[E']$, then by Remark~\ref{rk3.5} (2), one of the neighbors of $x$ in $G^2[E']$, say $y$, is an interior vertex of the path of length three to which $x$ belongs. Thus $y$ is a vertex of $K$ and so $y$ has $(k-1)$ neighbors in $G^2[E']$, a contradiction by Remark~\ref{rk3.5} (1). 
 
 Let $x_1,\dots,x_k$ be the vertices of $K$. By Remark~\ref{rk3.5} (3),  $x_2,\dots, x_{k}$ are $l_{x_1}$-siblings of $x_1$. Let $u$ and $v$ be the $l_x$-fathers of $x_1$ and suppose that $v$ is a bad father. Then $u$ is the common $l_{x_1}$-father of $x_1$ and $x_i$ for every $i$, $2\leq i\leq k$. Hence, $u$ is a $k$-vertex whose neighbors are all $k$-vertices, which is a contradiction.
\end{proof}

 Thus by Brooks' Theorem,  $G^2[E']$ is $(k-1)$-colorable. Color the vertices of $E'$ by the colors $2_1, \dots, 2_{k-1}$ to obtain a $(1^{k-1},2^{k-1})$-packing coloring of $G$. 
\end{proof}

Note that we proved the above result for $k\ge 4$ only since for $k=3$, we would have to show that $G^2[E']$ does not contain an odd cycle. Moreover, a better result is already known in this case: Every 2-saturated subcubic graph is $(1,1,2,3)$-packing colorable~\cite{MT2}.


\section{ $k$-Degree Graphs}

We finish with the general case of $k$-degree graphs and prove the following result (using one more color 2 than for the $(k-1)$-saturated case).
\begin{theorem}\label{kdeg}
    Every $k$-degree graph, $k\geq 3$, is $(1^{k-1},2^k)$-packing colorable.
\end{theorem}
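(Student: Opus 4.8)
The plan is to reuse the iterated maximum--weight independent set machinery of Theorem~\ref{k-1sat}, now without any saturation hypothesis and spending exactly one extra color $2$ to absorb the richer structure. Assuming $G$ connected, I would build a sequence $S_1,\dots,S_{k-2}$ of independent sets, where $S_i$ maximizes the same weight $\phi_i(T)=|X_i(T)|+0.4|Y_i(T)|$ as in Theorem~\ref{k-1sat} ($X_i,Y_i$ being the $k$-vertices and non-$k$-vertices of $T$) over all independent sets $T$ of $G[V(G)\setminus(S_1\cup\dots\cup S_{i-1})]$. As before, maximality forces every vertex of $\overline{S}=V(G)\setminus(S_1\cup\dots\cup S_{k-2})$ to have a neighbour in each $S_j$, hence at least $k-2$ neighbours in $S$; consequently $\Delta(G[\overline{S}])\le 2$, so $G[\overline{S}]$ is a disjoint union of paths and cycles in which every non-$k$-vertex is isolated and every interior-path or cycle vertex is a $k$-vertex. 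I would color each $S_i$ by $1_i$.

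The colors left are $1_{k-1}$ and $2_1,\dots,2_k$. I would choose $E\subseteq\overline{S}$ to be a \emph{maximal} independent set of $G[\overline{S}]$ containing every isolated vertex (this is also independent in $G$, since edges of $G$ inside $\overline{S}$ are exactly those of $G[\overline{S}]$), color $E$ by $1_{k-1}$, and reduce to coloring $E'=\overline{S}\setminus E$ by $2_1,\dots,2_k$. By the observation in the Notation section this is exactly a $k$-coloring of $G^2[E']$, so by Brooks' Theorem it suffices to prove $\Delta(G^2[E'])\le k$ and that $G^2[E']$ has no $K_{k+1}$ component. Maximality of $E$ already gives each $x\in E'$ a neighbour in $E$, so on its path/cycle $x$ has at most one $E'$-neighbour and at most two vertices at path/cycle-distance two, bounding its \emph{internal} $G^2[E']$-degree by $3$.

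The core is the sibling analysis of Theorem~\ref{k-1sat}, now carried over paths and cycles of arbitrary length. For $x\in E'$ I would reproduce, via the swaps $S_i'=(S_i\setminus N(\cdot))\cup\{\cdot\}$, the facts that a vertex with a unique $i$-father has a $k$-vertex as that father, that a $k$-vertex acquires a sibling only through a color $l_x$ in which it owns two fathers, and that a $(k-1)$-vertex has at most one sibling per color. Thus every interior-path and every cycle vertex (a $k$-vertex with a unique father in each $S_i$) has \emph{no} sibling, so its $G^2[E']$-degree is purely internal, at most $3\le k$. After reusing the good/bad father recoloring verbatim to delete from $E'$ any vertex owning a color all of whose fathers are good, each surviving $k$-vertex keeps at most $k-1$ siblings and each $(k-1)$-vertex at most $k-2$; adding the internal part yields $\Delta(G^2[E'])\le k$. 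A putative $K_{k+1}$ is excluded exactly as in the final claim of Theorem~\ref{k-1sat}: it would force a common father that is a $k$-vertex all of whose neighbours are $k$-vertices.

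The main obstacle is precisely the appearance of cycles and of paths of length greater than three, which the $(k-1)$-saturated case excluded. The tight configuration is an endpoint $k$-vertex $x$ of a long path: its unique $\overline{S}$-neighbour lies in $E$ (by maximality), so its internal contribution is the single vertex at path-distance two, which together with its at most $k-1$ siblings realizes $\Delta(G^2[E'])=k$. It is exactly this one extra neighbour that the additional color $2_k$ absorbs, so that we aim for $\Delta(G^2[E'])\le k$ rather than the $\le k-1$ of Theorem~\ref{k-1sat}. I expect the delicate bookkeeping to be the interplay at a shared father lying on two distinct long components and the verification that the recoloring never strands a vertex, together with the fact that here, unlike in Theorem~\ref{k-1sat}, having $k$ colors lets Brooks' Theorem handle odd cycles in $G^2[E']$, so the argument goes through already for $k=3$.
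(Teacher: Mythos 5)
Your strategy---iterate the weighted maximum independent sets of Theorem~\ref{k-1sat} with $k-2$ sets, carve out a maximal independent set $E$ for the colour $1_{k-1}$, and spend one extra colour $2$ on $E'$---is not the paper's route, and it has a genuine gap: nearly every step of Theorem~\ref{k-1sat} that you propose to reuse ``verbatim'' invokes the $(k-1)$-saturation hypothesis at a point where you no longer have it. Concretely: (i) your assertion that an interior-path or cycle vertex of $G[\overline{S}]$ (a $k$-vertex with a unique $i$-father for every $i$) has \emph{no} sibling rests on Claim~\ref{cm3.2.2}(3), whose proof needs Remark~\ref{rk3.4}(2) (``at most one of the two $l_x$-fathers is a $k$-vertex''), itself obtained by counting $k$-neighbours against the saturation bound; without saturation both relevant fathers may be $k$-vertices and the swap gives $\phi_i(S'_i)=\phi_i(S_i)-2+2$, no contradiction. (ii) The bound of $k-1$ siblings for a $k$-vertex and the exclusion of a large clique in the square both terminate in ``hence $u$ is a $k$-vertex whose neighbours are all $k$-vertices, a contradiction''---which is only a contradiction for $(k-1)$-saturated graphs. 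In an arbitrary $k$-degree graph such a vertex is perfectly admissible, so neither $\Delta(G^2[E'])\le k$ nor the absence of $K_{k+1}$ is established; a priori a cycle vertex of $E'$ can have up to $k-2$ siblings plus three internal $G^2[E']$-neighbours, exceeding $k$.

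The paper avoids all of this by changing the construction rather than merely adding a colour: it takes $k-1$ (not $k-2$) pairwise disjoint nonempty independent sets $S_1,\dots,S_{k-1}$, chosen first to maximize $|S_1\cup\dots\cup S_{k-1}|$ and, subject to that, to \emph{minimize} the number $\theta(S)$ of edges between $S$ and $\overline{S}$. With $k-1$ sets every vertex of $\overline{S}$ has a father in each $S_i$, so $G[\overline{S}]$ is a matching plus isolated vertices---no cycles, no long paths, no set $E$---and all of $\overline{S}$ is coloured with $2_1,\dots,2_k$ at once. The saturation-based contradictions are replaced by two new mechanisms: the secondary minimality of $\theta(S)$ (Claims~\ref{c2} and~\ref{c3}, which control the ``dense'' vertices whose non-bad father has many uncoloured neighbours), and an alternating $\overline{S}S_t$-path exchange (Claim~\ref{c5}) that rules out $K_{k+1}$ in $G^2[\overline{S}]$. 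To salvage your approach you would need substitutes for exactly these two mechanisms; as written, the degree bound and the clique exclusion for $G^2[E']$ are unsupported.
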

\begin{proof}
Without loss of generality, suppose $G$ is connected. Let $T_1,\dots,T_{k-1}$ be pair-wisely disjoint  independent sets in $G$ such that $T_i\neq \emptyset$ for every $i$, $1\leq i\leq k-1$. We call $T=T_1\cup\dots \cup T_{k-1}$ a $(k-1)$-independent set of $G$. Let $\overline{T}=V(G)\setminus T$. We define $\Theta(T)=\{xy\in E(G):\; x\in T \; \text{and} \; y\in \overline{T}\}$ and   $\theta(T)=|\Theta(T)|$. For a vertex $x$ in $\overline{T}$, we define $\Theta_T(x)=\{ux\in E(G):\; u\in T\}$ and $\theta_T(x)=|\Theta_T(x)|$. Clearly, $\theta(T)=\sum_{x\in \overline{T}}\theta_T(x). $

Let $S_1,\dots, S_{k-1}$ be  pair-wisely disjoint  independent sets such that $S_i\neq \emptyset$ for every $i$, $1\leq i\leq k-1$. and let $S=S_1\cup\dots \cup S_{k-1}$. Suppose that $S$ is chosen such that $|S|$ is maximum among all $(k-1)$-independent sets. Subject to this condition, suppose that $S$ is chosen such that $\theta(S)$ is minimum.

Our plan is to color each vertex in $S_i$ by $1_i$, $1\leq i\leq k-1$, and then we will use the colors $2_1,\dots, 2_{k}$ for the coloring of the vertices of $\overline{S}$.

Let $x$ be a vertex of $\overline{S}$ and let $y$ be a vertex in $S_i$ for some $i$, $1\leq i\leq k-1$. We say $y$ is an {\em $i$-father} of $x$ if $x$ and $y$ are adjacent. For brevity, we say $y$ is a {\em father} of $x$. Note that $x$ has at least one $i$-father for every $i$, $1\leq i\leq k-1$. In fact, if there exists $i$, $1\leq i\leq k-1$, such that $x$ has no $i$-father, then  $S'=S'_1\cup \dots \cup S'_{k-1}$, where $S'_j=S_j$ for every $j$, $1\leq j\neq i\leq k-1$, and $S'_i=S_i\cup\{x\}$, is a $(k-1)$-independent set  with $|S|<|S'|$, a contradiction. Consequently every vertex $x$ in $\overline{S}$ is either a $(k-1)$-vertex or a $k$-vertex.  Besides, if $x$ is a $(k-1)$-vertex, then $x$ has no neighbor in $\overline{S}$ and $x$ has a unique $i$-father for every $i$, $1\leq i\leq k-1$.  On the other hand, if $x$ is a $k$-vertex, then $x$ has at most one neighbor in $\overline{S}$. A $k$-vertex $x$ in $\overline{S}$ is said to be a {\em heavy $k$-vertex} if $x$ has a neighbor in $\overline{S}$. The neighbor in $\overline{S}$ of a heavy $k$-vertex, which is also a heavy $k$-vertex, will be called the {\em twin} of $x$. If $x$ is a heavy $k$-vertex, then $x$ has a unique $i$-father for every $i$, $1\leq i\leq k-1$. Moreover, if $x$ is a non-heavy $k$-vertex then there exists $i$, $1\leq i\leq k-1$, such that $x$ has two $i$-fathers. Such an $i$ will be denoted by $m_S(x)$. Clearly, $x$ has a unique $j$-father for every $j\neq m_S(x)$. 

Let $x$ be a vertex of $\overline{S}$ and let $u$ be an $i$-father of $x$ for some $i$, $1\leq i\leq k-1$. We say $u$ is a {\em bad $i$-father} of $x$ with respect to $S$ if $N(u)\cap S_j\neq\emptyset$ for every $j$, $1\leq j\neq i\leq k-1$.  For brevity, we say $u$ is a bad father of $x$ unless when needed. Thus if $u$ is a bad $i$-father of $x$ then $u$ has at most two neighbors in $\overline{S}$; that is $u$ is an $i$-father of at most one vertex in $\overline{S}\setminus\{x\}$.    We have the following result:

\begin{claim}\label{c0}
Let $x$ be a vertex in $\overline{S}$, then:
\begin{enumerate}
    \item every father of $x$ is bad if $x$ is a $(k-1)$-vertex or a heavy $k$-vertex, and
    \item  every $i$-father of $x$ is bad for every $i$, $1\leq i\neq m_S(x)\leq k-1$, and at most one of the $m_S(x)$-fathers is not bad if $x$ is a non-heavy $k$-vertex.
\end{enumerate}
\end{claim}
\begin{proof}
\begin{enumerate}
    \item Let $u$ be an $i$-father of $x$ for some $i$, $1\leq i\leq k-1$, then $u$ is the unique $i$-father of $x$. Suppose to the contrary that $u$ is not a bad father, then there exists $t$, $1\leq t\neq i\leq k-1$, such that $u$ has no neighbor in $S_t$. Therefore $S'=S'_1\cup \dots \cup S'_{k-1}$, where $S'_j=S_j$ for every $j$, $j\in \{1,\dots, k-1\}\setminus \{i,t\}$, $S'_i=(S_i\setminus\{u\})\cup \{x\}$ and $S'_t=S_t\cup\{u\}$, is a $(k-1)$-independent set  with $|S|<|S'|$, a contradiction. 
    \item Let $u$ be an $i$-father of $x$ for some $i$, $1\leq i\leq k-1$. If $i\neq m_S(x)$, then $u$ is the unique $i$-father of $x$, and, as above in (1), we can prove that $u$ is a bad $i$-father. Else, let $v$ be the $m_S(x)$-father of $x$ with $v\neq u$. If both $u$ and $v$ are not bad fathers, then let $t$ (resp. $t'$), $\{t,t'\}\subset\{1,\dots, k-1\}\setminus\{m_S(x)\}$, such that $u$ (resp. $v$) has no neighbor in $S_t$ (resp. $S_{t'}$). Consequently,   $S'=S'_1\cup \dots \cup S'_{k-1}$, where $S'_j=S_j$ for every $j$, $j\in \{1,\dots, k-1\}\setminus \{i,t,t'\}$, $S'_i=(S_i\setminus\{u,v\})\cup \{x\}$, $S'_t=S_t\cup\{u\}$, and $S'_{t'}=S_{t'}\cup\{v\}$, is a $(k-1)$-independent set  with $|S|<|S'|$, a contradiction. 
 \end{enumerate}   
\end{proof}

Color the vertices of $S_i$ by $1_i$ for every $i$, $1\leq i\leq k-1$. 
We will prove now that the vertices of $\overline{S}$ can be colored by $2_1,\dots, 2_{k}$. In order to reach this result, we will prove first $\Delta (G^2[\overline{S}])\leq k$  and then $G^2[\overline{S}]$ contains no complete subgraph of order $k+1$. This result allows us to prove, using Brooks' Theorem, that $G^2[\overline{S}]$ is $k$-colorable which means that $G[\overline{S}]$ can be colored by the colors $2_1, \dots, 2_{k}$. 

Let $x$ and $y$ be two vertices of $\overline{S}$. We say $x$ is  a sibling of $y$  if one of the conditions is satisfied:
\begin{itemize}
    \item $y$ is a heavy $k$-vertex and $x$ is the twin of $y$.
    \item $x$ and $y$ have a common father.
\end{itemize}
Moreover, if $x$ is not the twin of $y$ but siblings and have a common $i$-father, then we say $y$ is an $i$-sibling of $x$. In order to prove  $\Delta (G^2[\overline{S}])\leq k$, it is important to count the number of siblings of each vertex in $\overline{S}$.

\begin{claim}\label{c1} Let $x$ be a vertex in $\overline{S}$. Then, for every $i$, $1\leq i\leq k-1$,  an $i$-father of $x$ is a father of at most one vertex in $\overline{S}\setminus\{x\}$  if $x$ is a $(k-1)$-vertex or $x$ is a heavy vertex. 
\end{claim}
\begin{proof}
By Claim~\ref{c0}, an $i$-father of $x$ is bad for every $i$, $1\leq i\leq k-1$. Since each bad father has $k-2$ neighbors in $S$, then the result follows for both cases.\end{proof}

Then $x$ has at most $k - 1$ siblings if $x$ is a $(k-1)$-vertex, and $x$ has at most $k - 1$ siblings other than its twin if $x$ is a heavy $k$-vertex. Consequently, $d_{G^2[\overline{S}]}(x)\leq k-1$ if $x$  is a $(k-1)$-vertex and $d_{G^2[\overline{S}]}(x)\leq k$ if $x$ is a heavy $k$-vertex. We still need to prove $d_{G^2[\overline{S}]}(x)\leq k$ if $x$ is a non-heavy $k$-vertex in $\overline{S}$. Here is a sequence of claims that will lead to the desired result:

\begin{claim} \label{c2}
    Let $x$ be a non-heavy $k$-vertex. Then $x$ has at most one $i$-sibling for every $i$, $1\leq i\neq m_S(x)\leq k-1$. Besides, let $u$ and $v$ be the $m_S(x)$-fathers of $x$ and suppose without loss of generality that $v$ is a bad one.  Then if $v$ is a father of a vertex in $\overline{S}\setminus\{x\}$, then $u$ is a father of at most one vertex in $\overline{S}\setminus\{x\}$. 
\end{claim}
\begin{proof}
   For $i\neq m_S(x)$,  every $i$-father of $x$ is a bad one, by Claim ~\ref{c0}, and so $x$ has hat at most one $i$-sibling.

We still need to prove that if $v$ is a father of a vertex in $\overline{S}\setminus\{x\}$, then $u$ is a father of at most one vertex in $\overline{S}\setminus\{x\}$. Suppose to the contrary that $u$ is a father of two vertices in $\overline{S}\setminus\{x\}$. Then there exists $t$, $1\leq t\neq m_S(x)\leq k-1$, such that $u$ has no neighbor in $S_t$. Consequently, $S'=S'_1\dots S'_{k-1}$, where $S'_i=S_i$ for every $i$, $i\in \{1,\dots, k-1\}\setminus\{m_S(x),t\}$, $S'_{m_S(x)}=(S_{m_S(x)}\setminus\{u,v\})\cup \{x\}$ and  $S'_t=S_t\cup \{u\}$, is a $(k-1)$-independent set with $|S'|=|S|$. Remark  that $v\in \overline{S'}$ while $\{x,u\}\subset S'$. We have $\Theta (S')=(\Theta (S)\setminus \Theta_S(x))\cup \Theta_{S'}(v)$. As $x$ is a non-heavy $k$-vertex in $\overline{S}$, then $\theta_S(x)=k$, while $v$ has a neighbor in $\overline{S'}$ since $v$ is a father of a vertex in $\overline{S}\setminus\{x\}$ and so $\theta_{S'}(v)=k-1$. Consequently, $\theta(S')=\theta(S')-1$, a contradiction.  
\end{proof}

Let $x$ be a non-heavy $k$-vertex and let $u$ and $v$ be the $m_S(x)$-fathers of $x$ with $v$ is a bad one. Clearly, when $u$ is not a bad father, then $u$ may have more than one neighbor in $\overline{S}\setminus\{x\}$. If $u$ is a father of more than one vertex in $\overline{S}\setminus \{x\}$, then we say $x$ is a {\em dense} non-heavy $k$-vertex. In this case, $v$ has no neighbor in $\overline{S}\setminus\{x\}$ by Claim ~\ref{c2}. We have the following remark concerning the non-heavy $k$-vertices:
\begin{remark}\label{r1}
Let $x$ be a non-heavy $k$-vertex and let $u$ and $v$ be the two $m_S(x)$-fathers of $x$ with $v$ is a bad father of $x$. Then $x$ satisfies one of the following:\begin{itemize}
    \item $x$ has no $m_S(x)$-sibling.
    \item $x$ has a unique $m_S(x)$-sibling, say $y$, such that $v$ is the father of $y$.
    \item $x$ has only two $m_S(x)$-siblings, say $y$ and $z$,  such that the common father of $x$ and $y$ is distinct from that of $x$ and $z$.
    \item $x$ has exactly $l$ $m_S(x)$-siblings for some $l$, $1\leq l\leq k-1$, such that $u$ is the common father of $x$ and each of these siblings.
\end{itemize} \end{remark}

 Moreover, we have the following result:
\begin{claim} \label{c3}
    Let $x$ be a dense non-heavy $k$-vertex. Then $x$ has at most $k-1$ siblings. 
\end{claim}
\begin{proof}
   Let $u$ and $v$ be the $m_S(x)$-fathers of $x$ and suppose that $v$ is a bad one. Let $l$ be the number of $m_S(x)$-siblings with $l>1$, then, as $x$ is dense and by Claim~\ref{c2}, $u$ is the father of these $l$ siblings. Since $u$ has $l+1$ neighbors in $\overline{S}$ and $\Delta(G)\leq k$, then there exists $i_1, \dots, i_{l-1}$, $\{i_1,\dots, i_{l-1}\}\subset (\{1,\dots,k-1\}\setminus\{m_S(x)\})$,  such that $u$ has no neighbor in $S_{i_j}$ for every $j$, $1\leq j\leq l-1$.
   \\ We will prove now that $x$ has no $i_j$-sibling for every $j$,  $1\leq j\leq l-1$. Suppose to the contrary that there exists $t$,  $1\leq t\leq l-1$, such that $x$ has an $i_t$-sibling, say $y$, and let $v'$ be the common $i_t$-father of $x$ and $y$. Recall that $v'$ is the unique $i_t$-father of $x$ since $i_t\neq m_S(x)$. Let $S'=S'_1\cup \dots \cup S'_{k-1}$ such that $S'_j=S_j$ for every $j$, $j\notin \{m_S(x),i_t\}$, $S'_{m_S(x)}=S_{m_S(x)}\setminus\{u\}$, and $S'_{i_t}=S_{i_t}\cup \{u\}$. Since $u$ has no neighbor in $S_{i_t}$, then $S'$ is a $(k-1)$-independent set with $|S'|=|S|$ and $\theta(S')=\theta(S)$. We have now $m_{S'}(x)=i_t$ and $u$ and $v'$ are the two $m_{S'}(x)$-fathers of $x$. Since $u$ has $l$ neighbors in $\overline{S'}$ and $l>1$, then  $v'$ is the unique bad $m_{S'}(x)$-father of $x$ with respect to $S'$. But $v'$ is a father of a vertex in $\overline{S'}\setminus\{x\}$, $y$, then we get a contradiction by Claim ~\ref{c2}. Thus, $x$ has no $i_j$-sibling for every $j$,  $1\leq j\leq l-1$.  On the other hand, an $i$-father of $x$ is a bad one and so it has at most one neighbor in $\overline{S}\setminus\{x\}$ for every $i\in \{1,\dots,k-1\}\setminus(\{i_1,\dots, i_{l-1}\}\cup\{m_S(x)\})$. Hence, $x$ has at most $(k-2-(l-1))+l$ siblings. 
\end{proof}

 Consequently, we have the following observation:
 \begin{remark}\label{r2}
     If $x$ is a non-heavy $k$-vertex then $x$ has at most $k$ siblings. In fact, if $x$ is not dense, then $x$ has at most two $m_S(x)$-siblings and at most one $i$-sibling for every $i$, $i\neq m_S(x)$. Besides, if $x$ has $k$ siblings then each father of $x$ has a unique neighbor in $\overline{S}\setminus \{x\}$ by Claim~\ref{c2}, Claim~\ref{c3} and Remark~\ref{r1}.  
 \end{remark}

From all the above results, one can deduce that $\Delta(G^2[\overline{S}])\leq k$. We still need to prove that $G^2[\overline{S}]$ contains no complete subgraph on $k+1$ vertices. 

 To characterize the vertices of a complete subgraph of  $G^2[\overline{S}]$  on $(k+1)$ vertices, we need the following result: 

\begin{claim}\label{c4}
 Let $x$ and $y$ be two $t$-siblings for some $t$, $1\leq t\leq k-1$, then either $x$ or $y$ has two $t$-fathers. 
\end{claim}
\begin{proof}
    Suppose to the contrary that both have a unique $t$-father, say $u$. Accordingly, $S'=S'_1\cup \dots \cup S'_{k-1}$, where $S'_i=S_i$ for every $i$, $i\neq t$, and $S'_t=(S_t\setminus\{u\})\cup \{x,y\}$, then $S'$ is a $(k-1)$-independent set with $|S'|>|S|$, a contradiction.
\end{proof}

Let $P$ be a path in $G[\overline{S}\cup S_t]$ for some $t$, $1\leq t\leq k-1$. We say that $P$ is an {\em alternating $\overline{S}S_t$-path} if any two consecutive vertices are neither both in $\overline{S}$ nor in $S_t$. 
We have the following result:

\begin{claim}\label{c5}
Let $P=x_1\dots x_r$, $r\geq 3$, be an alternating $\overline{S}S_t$-path of maximal length for some $t$, $1\leq t\leq k-1$,  then $|\overline{S}\cap\{x_1,x_r\}|\leq 1$.

\end{claim}
\begin{proof}
Suppose to the contrary that $|\overline{S}\cap\{x_1,x_r\}|=2$. Consequently, $|P\cap \overline{S}|=|P\cap S_t|+1$. Moreover, since $P$ is of maximal length, then $x_2$ (resp. $x_{r-1}$) is the unique $t$-father  of $x_1$ (resp. $x_r$). Besides, whenever $x_l\in P\cap \overline{S}$, $3\leq l\leq r-2$, $x_{l-1}$ and $x_{l+1}$ are the  $t$-fathers of $x_{l}$. This means that $(S_t\setminus\{x_{l-1},x_{l+1}\})\cup \{x_l\}$ is an independent set   whenever $x_l\in P\cap \overline{S}$, $3\leq l\leq r-2$. Thus, $S'=S'_1\dots S'_{k-1}$, where $S'_i=S_i$ for every $i$, $1\leq i\neq t\leq k-1$, and $S'_t=(S_t\setminus(P\cap S_t))\cup (P\cap \overline{S})$, is a $(k-1)$-independent set with $|S'|=|S|+|P\cap \overline{S}|-|P\cap S_t|>|S|$, a contradiction. 
\end{proof}

Finally, we reach the desired result:
\begin{claim}
    $G^2[\overline{S}]$ contains no complete subgraph on $k+1$ vertices.
\end{claim}
\begin{proof}
    Suppose to the contrary that $G^2[\overline{S}]$ contains a complete subgraph on $k+1$ vertices, say $K$. Since a vertex $x$ of $K$ has $k$ siblings, then, by Claim~\ref{c0}, $x$ is a $k$-vertex. Moreover, $x$ is either a heavy $k$-vertex or $x$ is a non-heavy $k$-vertex which is not dense by Claim~\ref{c3}. Besides, by Claim~\ref{c4}, $K$ contains at most two heavy vertices: a heavy vertex and its twin. This result is due to the fact that a heavy vertex has a unique $i$-father for every $i$, $1\leq i\leq k-1$.  Moreover, by Remark~\ref{r2}, each father of a non-heavy $k$-vertex $x$ in $K$  has a unique neighbor in $\overline{S}\setminus\{x\}$.  Also, if $x$ is a heavy vertex of $K$ then each father of $x$ is a father of a unique vertex in $K\setminus\{x,y\}$ by Claim~\ref{c1}, where $y$ is the twin of $x$. Thus we can say that each father of a vertex of $K$ is a father of exactly two vertices in $K$.
    
    Let $x_1,\dots,x_{k+1}$ be the vertices of $K$. Suppose without loss of generality that $x_1$ is a non-heavy $k$-vertex. 
    Let $l\in \{1,\dots, k-1\}\setminus m_S(x_1)$ and let $P$ be an alternating $\overline{S}S_l$-path of maximal length containing $x_1$. Since $x_1$ has a unique $l$-father, then $x_1$ is an end vertex of $P$. Set $P=x_1u_1\dots u_r$, $r\geq 1$. Since each father of a vertex of $K$ is a father of two vertices of $K$, then $r\geq 2$, $P\cap\overline{S}\subset K$,  and $u_r$ is a vertex of $K$.  Accordingly, $P$ is an alternating $\overline{S}S_l$-path whose both ends are in $\overline{S}$, a contradiction by Claim~\ref{c5}.     
\end{proof}

Hence, by Brooks' Theorem $G^2[\overline{S}]$ is $k$-colorable. Color the vertices of $G[\overline{S}]$ by $2_1,\dots, 2_{k}$, and so we obtain a $(1^{k-1}, 2^k)$-packing coloring of $G$. 
\end{proof}


\section{Sharpness of the Results and Open Problems}
\label{s:concl}
For Theorem 2.1, we can construct a $k$-degree $0$-saturated graph which is not $(1^{k-1},4)$-packing colorable: take two copies of the complete graph $K_k$ and link one vertex of the first copy to one vertex of the second copy by a path of length two. If we have only $k-1$ colors 1, then (at least) one vertex of each $K_k$ cannot be colored, and since the graph has diameter 4, then the two uncolored vertices cannot be colored both with color 4.

For the sharpness of Theorems~\ref{tsat}, \ref{k-1sat}, and \ref{kdeg}, we construct a counter-example for $(1^{k-1},3^t)$-colorability for $t$-saturated $k$-degree graphs, $1\le t\le k$.

\begin{proposition}
For any $k\ge 3$ and $1\le t\le k$, there exists a $t$-saturated $k$-degree graph that is not $(1^{k-1},3^t)$-packing colorable.
\end{proposition}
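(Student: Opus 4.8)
The plan is to build, for each $k\ge 3$ and each $1\le t\le k$, what one might call a \emph{complete graph of cliques}. I would take $t+1$ vertex-disjoint copies $Q_0,\dots,Q_t$ of $K_k$, and inside each $Q_i$ designate $t$ distinct vertices $v_{i,j}$, one for every index $j\ne i$; this is possible precisely because there are $t$ such indices $j$ and $|V(Q_i)|=k\ge t$. For every pair $i<j$ I add the single edge $v_{i,j}v_{j,i}$, so that each clique $Q_i$ is joined to each of the other $t$ cliques by exactly one edge, each such edge incident to its own private \emph{connector} vertex of $Q_i$. Call the resulting graph $G$; I claim it has maximum degree $k$, is $t$-saturated, and is not $(1^{k-1},3^t)$-packing colorable.

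First I would do the degree and saturation bookkeeping. Every vertex already has degree $k-1$ inside its copy of $K_k$, a connector gains exactly one extra (external) edge, and non-connectors gain none; hence connectors are the only $k$-vertices, the remaining $k-t$ vertices of each clique are $(k-1)$-vertices, and $\Delta(G)=k$. A connector $v_{i,j}$ is adjacent to the $t-1$ other connectors of $Q_i$, to its external partner $v_{j,i}$, and to $k-t$ non-connectors of $Q_i$; only the first two kinds are $k$-vertices, so each $k$-vertex is adjacent to exactly $t$ $k$-vertices and $G$ is (exactly) $t$-saturated.

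Next I would establish the metric fact that powers the obstruction: any two vertices of $G$ are at distance at most $3$. Two vertices of the same clique are adjacent, and for $p\in Q_i$, $q\in Q_j$ with $i\ne j$ the walk $p - v_{i,j} - v_{j,i} - q$ has length at most $3$, since $p$ (resp.\ $q$) lies in the same clique as the connector $v_{i,j}$ (resp.\ $v_{j,i}$). Then comes the coloring argument: in any $(1^{k-1},3^t)$-packing coloring each $Q_i$ is a clique on $k$ pairwise-adjacent vertices, so it needs $k$ pairwise distinct colors; as only $k-1$ colors $1$ exist, at least one vertex of $Q_i$ receives a color $3$. Picking one such vertex in each of $Q_0,\dots,Q_t$ produces $t+1$ distinct vertices colored with colors $3$, pairwise at distance at most $3$, hence forced to use $t+1$ distinct colors $3$ --- impossible with only $t$ available. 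Thus $G$ is not $(1^{k-1},3^t)$-packing colorable. (For $t=1$ this degenerates to two copies of $K_k$ joined by a single edge, matching the flavor of the $0$-saturated example above.)

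The step I would treat most carefully --- the real crux --- is the tension between the distance bound and the saturation count. To keep \emph{every} cross-clique pair at distance at most $3$ one must connect each clique \emph{directly} to all $t$ others, and to respect $\Delta(G)\le k$ one is forced to route these $t$ external edges through $t$ \emph{distinct} connectors of the clique; this simultaneously makes each connector see exactly $t$ other $k$-vertices. Both requirements are satisfiable exactly when $t\le k$, which explains why $1\le t\le k$ is the natural range and shows that the three packing-coloring theorems cannot be improved by trading their colors $2$ for $t$ colors $3$.
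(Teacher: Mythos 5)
Your construction is exactly the paper's $G_{k,t}$ ($t+1$ copies of $K_k$ pairwise joined by single edges through distinct connector vertices), and your obstruction argument — each clique forces a color $3$, and the $t+1$ forced vertices lie pairwise within distance $3$ — is the same as the paper's diameter-$3$ argument. The proof is correct; you simply spell out the degree/saturation bookkeeping and the distance bound in more detail than the paper does.
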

\begin{proof}
Let $G_{k,t}$ be the graph constructed in this way: Start from $t+1$ copies of $K_k$ and link a different vertex of each $K_k$ to a vertex of every other $K_k$ in such a way any vertex of the obtained graph has degree $k$ or $k-1$ (i.e., each vertex of a $K_k$ is linked with at most one vertex of any other $K_k$).  Thus $G_{k,t}$ is a $k$-degree graph and it is $t$-saturated. In fact, if $x$ is a vertex of a copy $K_k$ and $x$ is of degree $k$, then $x$ is adjacent to a vertex $y$ of another copy of $K_k$. Thus $x$  has $t$ neighbors of degree $k$ ($t-1$ in the same copy of $K_k$ plus the vertex $y$). See Figure~\ref{fig:ce} for an example with $k,t=5,3$. \\
Hence $G_{k,t}$ is not $(1^{k-1},3^t)$-colorable since using only the $k-1$ colors 1, there will remain one uncolored vertex in each of the $t+1$ copies of $K_k$ and $t$ colors 3 are not enough to color them since $G_{k,t}$ has diameter 3.
\end{proof}

This example, along with Theorem~\ref{tsat} shows that replacing one color 2 with $t$ colors 3 is not always sufficient for $t$-saturated $k$-degree graphs, $1\le t\le k-2$.

\begin{figure}[ht]
\begin{center}
\begin{tikzpicture}[scale=1.3]
\node at (3.6,3) (a1) [circle,draw=black,fill=white,scale=0.5]{};
\node at (4.4,3) (b1) [circle,draw=black,fill=white,scale=0.5]{};
\node at (4.75,3.85) (c1)[circle,draw=black,fill=white,scale=0.5]{};
\node at (4,4.35) (d1) [circle,draw=black,fill=white,scale=0.5]{};
\node at (3.25,3.85) (e1) [circle,draw=black,fill=white,scale=0.5]{};
\draw (a1) -- (b1) -- (c1) -- (d1) -- (e1) -- (a1);
\draw (a1) -- (c1) -- (e1) -- (b1) -- (d1) -- (a1);
\node at (6.6,3) (a2) [circle,draw=black,fill=white,scale=0.5]{};
\node at (7.4,3) (b2) [circle,draw=black,fill=white,scale=0.5]{};
\node at (7.75,3.85) (c2)[circle,draw=black,fill=white,scale=0.5]{};
\node at (7,4.35) (d2) [circle,draw=black,fill=white,scale=0.5]{};
\node at (6.25,3.85) (e2) [circle,draw=black,fill=white,scale=0.5]{};
\draw (a2) -- (b2) -- (c2) -- (d2) -- (e2) -- (a2);
\draw (a2) -- (c2) -- (e2) -- (b2) -- (d2) -- (a2);
\node at (3.6,0) (a3) [circle,draw=black,fill=white,scale=0.5]{};
\node at (4.4,0) (b3) [circle,draw=black,fill=white,scale=0.5]{};
\node at (4.75,0.85) (c3)[circle,draw=black,fill=white,scale=0.5]{};
\node at (4,1.35) (d3) [circle,draw=black,fill=white,scale=0.5]{};
\node at (3.25,0.85) (e3) [circle,draw=black,fill=white,scale=0.5]{};
\draw (a3) -- (b3) -- (c3) -- (d3) -- (e3) -- (a3);
\draw (a3) -- (c3) -- (e3) -- (b3) -- (d3) -- (a3);
\node at (6.6,0) (a4) [circle,draw=black,fill=white,scale=0.5]{};
\node at (7.4,0) (b4) [circle,draw=black,fill=white,scale=0.5]{};
\node at (7.75,0.85) (c4)[circle,draw=black,fill=white,scale=0.5]{};
\node at (7,1.35) (d4) [circle,draw=black,fill=white,scale=0.5]{};
\node at (6.25,0.85) (e4) [circle,draw=black,fill=white,scale=0.5]{};
\draw (a4) -- (b4) -- (c4) -- (d4) -- (e4) -- (a4);
\draw (a4) -- (c4) -- (e4) -- (b4) -- (d4) -- (a4);
\draw (d1) -- (d2);
\draw (e1) -- (e3);
\draw (c2) -- (c4);
\draw (b3) -- (a4);
\draw (b1) -- (e4);
\draw (a2) -- (c3);

\end{tikzpicture}
\end{center}
\caption{\label{fig:ce} The graph $G_{5,3}$ is a  $3$-saturated $5$-degree graph that is not $(1^4,3^3)$-packing colorable.}
\end{figure}
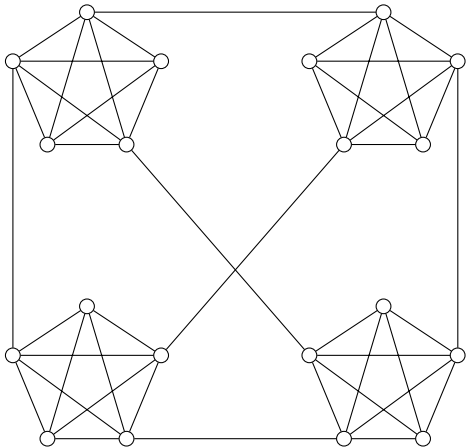

In~\cite{LZZ24}, the authors proved that every subcubic graph is $(1,1,2,2,3)$-packing colorable. Inspired by this result, we propose the following conjecture that refines Theorem~\ref{kdeg}:
\begin{conjecture}
    For any $k\geq 3$, every $k$-degree graph is $(1^{k-1},2^{k-1},3)$-packing colorable.
\end{conjecture}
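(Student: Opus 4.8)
The plan is to build directly on the machinery of Theorem~\ref{kdeg}. First I would select pairwise disjoint independent sets $S_1,\dots,S_{k-1}$ exactly as in that proof — chosen so that $|S|$ is maximum and, subject to that, $\theta(S)$ is minimum — color each $S_i$ with $1_i$, and re-invoke the same structural analysis. This already yields that $\Delta(G^2[\overline{S}])\le k$, that $G^2[\overline{S}]$ contains no complete subgraph on $k+1$ vertices, and, via Claim~\ref{c1}, that every $(k-1)$-vertex of $\overline{S}$ already has $G^2[\overline{S}]$-degree at most $k-1$; hence the only vertices of degree $k$ in $G^2[\overline{S}]$ are $k$-vertices of $\overline{S}$. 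The whole problem then reduces to a single existence statement: exhibit a set $W\subseteq\overline{S}$ that is independent in $G^3[\overline{S}]$ (so that all its vertices may receive the one color $3$, which requires pairwise $G$-distance greater than $3$) and whose removal leaves $G^2[\overline{S}\setminus W]$ with maximum degree at most $k-1$ and no complete subgraph on $k$ vertices. Brooks' Theorem would then color $G^2[\overline{S}\setminus W]$ with $2_1,\dots,2_{k-1}$, completing a $(1^{k-1},2^{k-1},3)$-packing coloring and refining the $(1^{k-1},2^{k})$ coloring of Theorem~\ref{kdeg} by trading one color $2$ (a $G^2$-independent class) for one color $3$ (a $G^3$-independent class).

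To construct $W$ I would proceed greedily over the \emph{dense spots} of $G^2[\overline{S}]$, namely the vertices of degree $k$ and the copies of $K_k$. For each copy of $K_k$ it suffices to place one of its vertices into $W$, turning it into a $K_{k-1}$ of maximum degree $k-2$; for a degree-$k$ vertex $x$ it suffices to put $x$ itself, or a single $G^2[\overline{S}]$-neighbor of $x$, into $W$. The delicate requirement is that any two chosen vertices be at $G$-distance greater than $3$. Here I would exploit the father/sibling description of Theorem~\ref{kdeg}: by Claim~\ref{c0}, every father of a $(k-1)$-vertex or of a heavy $k$-vertex is \emph{bad}, and a bad father has at most two neighbors in $\overline{S}$; moreover Remark~\ref{r1}, Remark~\ref{r2}, Claim~\ref{c2} and Claim~\ref{c3} pin down precisely how high-degree vertices cluster around their common fathers. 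These facts should show that two distinct dense spots are typically already more than $G$-distance $3$ apart, so that a far-apart greedy choice of representatives is feasible.

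The hard part will be exactly this simultaneous tension: $W$ must be sparse enough to be $G^3$-independent, yet must meet every $K_k$ and dominate (in $G^2[\overline{S}]$) every degree-$k$ vertex. A color-$3$ class forbids two chosen vertices within $G$-distance $3$, whereas dense spots can sit close together — siblings are already at distance $2$, and twins at distance $1$. I expect the crux to be a distance analysis establishing that when two $K_k$'s, or a $K_k$ and a degree-$k$ vertex, lie within $G$-distance $3$, they must share enough father structure that a single well-placed vertex of $W$ repairs both, or else that such a configuration cannot arise under the maximality of $|S|$ and minimality of $\theta(S)$. A secondary difficulty is the boundary case $k=3$, where Brooks' Theorem for a $2$-coloring additionally forces one to rule out odd cycles and triangles in $G^2[\overline{S}\setminus W]$; in that case the conjecture coincides with the $(1,1,2,2,3)$-packing colorability of subcubic graphs of~\cite{LZZ24}, so it could be cited directly rather than reproved, or treated by adapting their discharging argument to the selection of $W$.
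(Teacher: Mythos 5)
The statement you are addressing is stated in the paper as a \emph{conjecture}: the authors give no proof of it, and indeed present it as an open problem motivated by the $k=3$ case settled in~\cite{LZZ24}. So there is no paper proof to compare against, and the relevant question is whether your proposal actually closes the gap. It does not. What you have written is a reduction plus a declaration of intent: you correctly observe that, reusing the sets $S_1,\dots,S_{k-1}$ and the structural claims of Theorem~\ref{kdeg}, it would suffice to exhibit a set $W\subseteq\overline{S}$ that is pairwise at $G$-distance at least $4$, meets every $K_k$ of $G^2[\overline{S}]$, and lowers the $G^2[\overline{S}]$-degree of every degree-$k$ vertex to at most $k-1$. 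That reduction is sound (modulo the $k=3$ Brooks boundary case, which you handle by citation). But the existence of such a $W$ \emph{is} the conjecture in disguise, and your argument for it consists of the phrases ``these facts should show'' and ``I expect the crux to be a distance analysis''. Nothing in Claims~\ref{c0}--\ref{c3} or Remarks~\ref{r1}--\ref{r2} bounds the $G$-distance between two distinct dense spots from below: two degree-$k$ vertices of $G^2[\overline{S}]$ that are not $G^2$-adjacent can still lie at $G$-distance exactly $3$, in which case each needs its own repair vertex in $W$, and any two candidate repair vertices (each being the vertex itself or one of its $G^2[\overline{S}]$-neighbors) may then be forced within distance $3$ of each other, violating $G^3$-independence. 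Your fallback — that such configurations ``cannot arise under the maximality of $|S|$ and minimality of $\theta(S)$'' — is exactly the lemma you would need to prove, and no argument for it is offered.

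A secondary, smaller issue: even granting a greedy construction of $W$, you never verify that removing one vertex per dense spot suffices globally. A single vertex of $\overline{S}$ can be a $G^2[\overline{S}]$-neighbor of several degree-$k$ vertices and simultaneously lie in (or near) a $K_k$; conversely a $K_k$ of $G^2[\overline{S}]$ minus one vertex still has maximum degree $k-1$ but the surviving vertices may retain degree $k$ in $G^2[\overline{S}\setminus W]$ through neighbors outside the clique. So the bookkeeping of which deletions fix which defects is itself nontrivial and unaddressed. In short: the framing is reasonable and consistent with how one would naturally attack the conjecture from Theorem~\ref{kdeg}, but the proposal contains no proof of its only substantive step, so the conjecture remains open as far as this argument goes.
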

From our experience of working in the domain of packing coloring of graphs, we also propose these problems:
\begin{problem}
    Prove or disprove that every 0-saturated $k$-degree graph is $(1^{k-2}, 2^k)$-packing colorable.
\end{problem}

\begin{problem}
    Prove or disprove  that every $(k-1)$-saturated $k$-degree graph, is $(1^{k-1}, 2,2)$-packing colorable.
\end{problem}

\begin{problem}
    Prove or disprove that every $k$-degree graph, $k\geq 4$, is $(1^{k-2}, 2^{2k})$-packing colorable. 
\end{problem}

   \end{document}